\newenvironment{customthm}[1]
  {\innercustomthm}
  {\endinnercustomthm}
\newenvironment{customlem}[1]
  {\innercustomlem}
  {\endinnercustomlem}
\newenvironment{customcor}[1]
  {\innercustomcor}
  {\endinnercustomcor}
\newtheorem*{thm*}{Theorem}
\newtheorem{thm}{Theorem}
\newtheorem{lem}[thm]{Lemma}
\newtheorem{cor}[thm]{Corollary}
\newtheorem{conj}[thm]{Conjecture}
\newtheorem{ques}[thm]{Question}
\newcommand{\N}{\mathbb{N}}
\begin{document}

\title{On the Ohba Number and Generalized Ohba Numbers of Complete Bipartite Graphs}

\author{Kennedy Cano$^1$, Emily Gutknecht$^1$,  Gautham Kappaganthula$^1$, \\ George Miller$^1$, Jeffrey A. Mudrock$^2$, and Ezekiel Thornburgh$^1$}

\footnotetext[1]{Department of Mathematics, College of Lake County, Grayslake, IL 60030.}

\footnotetext[2]{Department of Mathematics and Statistics, University of South Alabama, Mobile, AL 36688.  E-mail:  {\tt {mudrock@southalabama.edu}}}

\maketitle

\begin{abstract}

We say that a graph $G$ is chromatic-choosable when its list chromatic number $\chi_{\ell}(G)$ is equal to its chromatic number $\chi(G)$.  Chromatic-choosability is a well-studied topic, and in fact, some of the most famous results and conjectures related to list coloring involve chromatic-choosability.  In 2002 Ohba showed that for any graph $G$ there is an $N \in \N$ such that the join of $G$ and a complete graph on at least $N$ vertices is chromatic-choosable.  The Ohba number of $G$ is the smallest such $N$.  In 2014, Noel suggested studying the Ohba number, $\tau_{0}(a,b)$, of complete bipartite graphs with partite sets of size $a$ and $b$.  In this paper we improve a 2009 result of Allagan by showing that $\tau_{0}(2,b) = \lfloor \sqrt{b} \rfloor - 1$ for all $b \geq 2$, and we show that for $a \geq 2$, $\tau_{0}(a,b) = \Omega( \sqrt{b} )$ as $b \rightarrow \infty$.  We also initiate the study of some relaxed versions of the Ohba number of a graph which we call generalized Ohba numbers.  We present some upper and lower bounds of generalized Ohba numbers of complete bipartite graphs while also posing some questions.

\medskip

\noindent {\bf Keywords.}  graph coloring, list coloring, Ohba number, chromatic-choosability

\noindent \textbf{Mathematics Subject Classification.} 05C15 

\end{abstract}

\section{Introduction}\label{intro}

In this paper all graphs are nonempty, finite, simple graphs unless otherwise noted.  Generally speaking we follow West~\cite{W01} for terminology and notation.  The set of natural numbers is $\N = \{1,2,3, \ldots \}$.  For $m, n \in \N$ and $m \leq n$, we write $[m : n]$ for the set $\{m, \ldots, n\}$. If $m > n$, then $[m : n] = \emptyset$. Also, we write $[n]$ for $[1:n]$. Additionally, we let $\binom{a}{b} = 0$ when $a<b$.  When $f:X \rightarrow Y$ is a function and $A \subseteq X$, we take $f(A)$ to be the set $\{f(a) : a \in A\}$.  If $G$ is a graph and $S \subseteq V(G)$, we write $G[S]$ for the subgraph of $G$ induced by $S$.  If $G$ and $H$ are vertex disjoint graphs, the \emph{join} of $G$ and $H$, denoted $G \vee H$, is the graph consisting of $G$, $H$, and additional edges added so that each vertex in $G$ is adjacent to each vertex in $H$.  Also, $K_{n,m}$ represents the complete bipartite graphs with partite sets of size $n$ and $m$.

\subsection{List Coloring and Chromatic-Choosability}

List coloring is a variation on the classical vertex coloring problem which was introduced in the 1970s independently by Vizing~\cite{V76} and Erd\H{o}s, Rubin, and Taylor~\cite{ET79}.  In the classical vertex coloring problem, we seek a \emph{proper $k$-coloring} of a graph $G$ which is a coloring of the vertices of $G$ with colors from $[k]$ so that adjacent vertices receive different colors. The \emph{chromatic number} of a graph, denoted $\chi(G)$, is the smallest $k$ such that $G$ has a proper $k$-coloring.  For list coloring, we associate a \emph{list assignment} $L$ with a graph $G$ which assigns to each vertex $v \in V(G)$ a list~\footnote{What is referred to in the literature as a list is actually a set.} of colors $L(v)$.  The graph $G$ is \emph{$L$-colorable} if there exists a proper coloring $f$ of $G$ such that $f(v) \in L(v)$ for each $v \in V(G)$ (we refer to $f$ as a \emph{proper $L$-coloring} of $G$).  A list assignment $L$ is called a \emph{$k$-assignment} for $G$ if $|L(v)|=k$ for each $v \in V(G)$.  The \emph{list chromatic number} of a graph $G$, denoted $\chi_\ell(G)$, is the smallest $k$ such that $G$ is $L$-colorable whenever $L$ is a $k$-assignment for $G$.  We say $G$ is \emph{$k$-choosable} if $k \geq \chi_\ell(G)$. 

It is easy to prove that for any graph $G$, $\chi(G) \leq \chi_\ell(G)$.  Erd\H{o}s, Rubin, and Taylor~\cite{ET79} observed that if $m = \binom{2k-1}{k}$, then $\chi_\ell(K_{m,m}) > k$.  The following related result is often attributed to Vizing~\cite{V76} or Erd\H{o}s, Rubin, and Taylor~\cite{ET79}, but it is best described as a folklore result. 

\begin{thm} \label{thm: listbipartite}
For $k \in \N$, $\chi_\ell(K_{k,t})=k+1$ if and only if $t \geq k^k$.
\end{thm}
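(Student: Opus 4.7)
The plan is first to show the easy upper bound $\chi_\ell(K_{k,t}) \leq k+1$ for all $t$, and then separately establish the two implications of the biconditional. Write $A = \{a_1, \ldots, a_k\}$ and $B = \{b_1, \ldots, b_t\}$ for the partite sets. Given any $(k+1)$-assignment $L$, coloring $A$ arbitrarily from its lists uses at most $k$ colors, so each $b \in B$ retains at least one color in $L(b)$; this gives $\chi_\ell(K_{k,t}) \leq k+1$.

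For the direction $t \geq k^k \Rightarrow \chi_\ell(K_{k,t}) \geq k+1$, I will construct a $k$-assignment $L$ with no proper $L$-coloring. Introduce $k^2$ fresh colors $c_{i,j}$ for $i, j \in [k]$ and set $L(a_i) := \{c_{i,1}, \ldots, c_{i,k}\}$, so that the lists on $A$ are pairwise disjoint and there are exactly $k^k$ maps $\phi \colon A \to \bigcup_i L(a_i)$ with $\phi(a_i) \in L(a_i)$. Enumerate $k^k$ of the vertices of $B$ by these maps, setting $L(b_\phi) := \{\phi(a_1), \ldots, \phi(a_k)\}$ (a set of size $k$, by disjointness), and assign the remaining $t - k^k$ vertices of $B$ arbitrary $k$-lists. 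Then any proper $L$-coloring $\phi$ of $A$ saturates the list $L(b_\phi)$, so no proper $L$-coloring of $K_{k,t}$ exists.

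For the direction $t < k^k \Rightarrow \chi_\ell(K_{k,t}) \leq k$, let $L$ be any $k$-assignment and split on the structure of $L$ restricted to $A$. If some $L(a_i) \cap L(a_j) \neq \emptyset$ with $i \neq j$, set $\phi(a_i) = \phi(a_j)$ equal to a common color and fill in $\phi$ arbitrarily on $A$; since $|\phi(A)| \leq k-1 < |L(b)|$, every $b \in B$ has a color in $L(b) \setminus \phi(A)$ available. Otherwise the $L(a_i)$'s are pairwise disjoint, and I bound the number of maps $\phi \in \prod_i L(a_i)$ that block a fixed $b \in B$ (meaning $L(b) \subseteq \phi(A)$): disjointness forces such a $\phi$ to be injective with $\phi(A) = L(b)$, which forces $|L(b) \cap L(a_i)| = 1$ for every $i$ and then uniquely determines each $\phi(a_i)$ as that single intersection element. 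Thus each $b$ rules out at most one map; since $t$ is strictly less than $k^k$, the total number of maps, some $\phi$ blocks no $b$, and it extends to a proper $L$-coloring by choosing any color in $L(b) \setminus \phi(A)$ for each $b \in B$.

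The main obstacle is the pairwise-disjoint case above: a naive union bound fails in general because a single $b$ can block up to $k!$ maps, but the dichotomy is designed so that overlap among the $L(a_i)$'s is converted into a non-injective (hence automatically unblockable) $\phi$, while pairwise disjointness is precisely the hypothesis that collapses the per-vertex blocker count to at most one and lets the count $t < k^k$ finish the argument.
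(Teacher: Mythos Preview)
The paper does not supply its own proof of this theorem: it is quoted as a folklore result attributed to Vizing and to Erd\H{o}s, Rubin, and Taylor, and is used later only as a black box (for example in the proof of Lemma~\ref{lem: closeaandi}). So there is nothing in the paper to compare your argument against.

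Your proof is correct and is essentially the standard folklore argument. The upper bound $\chi_\ell(K_{k,t})\le k+1$ and the bad $k$-assignment when $t\ge k^k$ are routine. In the remaining direction your key observation is exactly right: when the lists $L(a_1),\ldots,L(a_k)$ are pairwise disjoint, a vertex $b\in B$ can block a transversal $\phi$ only if $\phi(A)=L(b)$, and the disjointness then forces $|L(b)\cap L(a_i)|=1$ for every $i$, pinning $\phi$ down uniquely. Hence each $b$ kills at most one of the $k^k$ transversals, and $t<k^k$ leaves one alive. The dichotomy you set up (overlapping lists on $A$ versus pairwise disjoint lists) is precisely what makes the count go through, and your closing paragraph explains this well.
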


Graphs that satisfy $\chi(G) = \chi_\ell(G)$ are known as \emph{chromatic-choosable}~\cite{O02}. Many classes of graphs have been conjectured to be chromatic-choosable. Perhaps the most well-known conjecture involving list coloring is about chromatic-choosability.  Indeed the famous List Coloring Conjecture states that every line graph of a loopless multigraph is chromatic-choosable. The List Coloring Conjecture was formulated independently by many different researchers (see~\cite{HC92}).  In 1995, Galvin~\cite{G95} showed that the List Coloring Conjecture holds for line graphs of bipartite multigraphs, and in 1996, Kahn~\cite{K96} proved an asymptotic version of the conjecture.

Due to its importance in chromatic graph theory, chromatic-choosability has received much attention in the literature (see e.g., \cite{KM18, KM19, PW03, ST09, TV96}).  A relatively recent development, which is important for this paper, is that in 2015 Noel, Reed, and Wu proved Ohba's Conjecture. 

\begin{thm} [\cite{NR15}] \label{thm: Ohba}
If $G$ is a graph satisfying $|V(G)| \leq 2 \chi(G) + 1$, then $G$ is chromatic-choosable.
\end{thm}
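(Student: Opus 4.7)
The plan is to argue by contradiction using a minimum counterexample paired with an extremal choice of the list assignment. Suppose $G$ is a vertex-minimum counterexample: there exists $k$ with $\chi(G)=k$, $|V(G)| \leq 2k+1$, and a $k$-assignment $L$ for which $G$ has no proper $L$-coloring. Among all such pairs $(G,L)$ with $|V(G)|$ minimum, I would then choose $L$ to additionally minimize $|\bigcup_{v} L(v)|$, breaking ties by some further canonical criterion (for example, maximizing the number of pairs $(v,w)$ with $L(v)=L(w)$).

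First I would derive standard reductions forced by these extremal choices. Any color appearing in a single list could be assigned to that vertex and the problem reduced, so every color must lie in at least two lists. Similarly, if some color $c$ appears on an independent set $I$ of $G$ whose removal preserves the inequality $|V(G-I)| \leq 2\chi(G-I)+1$, one could color $I$ with $c$ and invoke minimality. These reductions should force a rigid ``block-like'' structure in the way colors repeat across lists, which is the engine of the proof.

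Next I would exploit the hypothesis $|V(G)| \leq 2k+1$ in terms of color classes: in any proper $k$-coloring of $G$, at most one color class has three vertices and every remaining class has size at most $2$. I would build a bipartite auxiliary graph $H$ between $V(G)$ and $C := \bigcup_v L(v)$, and attempt to produce a proper $L$-coloring by finding a collection of $|V(G)|$ edges, grouped so that each chosen color class is an independent set of $G$ of the prescribed size. A Hall-type (or defect Hall) argument, combined with the rigidity from the previous step, should force the required system of representatives, giving a proper $L$-coloring and a contradiction.

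The main obstacle I expect is precisely the single independent set of size three allowed by the bound $2\chi(G)+1$: the analogue of this result for $|V(G)| \leq 2\chi(G)$ is substantially easier, and obtaining the sharp $+1$ requires controlling how the unique ``triple'' color class interacts with the rest of the list structure and with the extremal assumptions on $L$. I anticipate that this step cannot be handled by a single clean matching argument and will require a delicate case analysis in which the triple is either absorbed into the Hall argument or used to construct an explicit reduction to a smaller instance.
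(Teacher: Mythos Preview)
The paper does not contain a proof of this theorem: it is quoted as a known result of Noel, Reed, and Wu~\cite{NR15} and used as a black box (for instance in the proof of Lemma~\ref{lem: problem13}). So there is no ``paper's own proof'' against which to compare your proposal.

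As for the proposal itself, it is a plausible high-level outline that broadly mirrors the architecture of the actual Noel--Reed--Wu argument (minimum counterexample, extremal choice of list assignment minimizing the total color palette, reduction to complete multipartite graphs, and a Hall/matching framework). However, what you have written is a plan, not a proof: the substantive content of the Noel--Reed--Wu paper is precisely the ``delicate case analysis'' you gesture at in your last paragraph, and none of that analysis is carried out here. In particular, the step where you say ``These reductions should force a rigid `block-like' structure \ldots\ which is the engine of the proof'' and the later Hall-type step are assertions, not arguments; the actual proof requires constructing a specific near-proper coloring, carefully analyzing how colors distribute across parts of size $1$, $2$, and $3$, and handling several non-obvious subcases. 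So while your instincts about the shape of the argument are reasonable, the proposal as written has genuine gaps at exactly the points where the real work lies, and cannot be assessed as a correct proof.
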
 

\subsection{The Ohba Number and Generalized Ohba Number}

In 2015, Allagan and Johnson~\cite{AJ15} introduced the notion of \emph{Ohba number} of a graph. The Ohba number of a graph $G$ is 0 when $G$ is chromatic-choosable, and it is the smallest $n \in \N$ for which $\chi_{\ell}(K_n \vee G) = \chi(K_n \vee G)$ when $G$ is not chromatic-choosable.  The Ohba number of every graph exists~\footnote{This was first shown in 2002~\cite{O02} and can be proven without Theorem~\ref{thm: Ohba}.  Interestingly, the analogue of this result for correspondence coloring~\cite{BK17}, which is a generalization of list coloring, was proven in 2017.} by Theorem~\ref{thm: Ohba} and the fact that $\chi(K_n \vee G) = n+\chi(G)$.  For each $s \in \N$ we define the \emph{$s^{th}$ generalized Ohba number} of a graph $G$ to be the smallest integer $n$ such that $\chi_{\ell}(K_n \vee G) - \chi(K_n \vee G) \leq s$. In the case that $\chi_{\ell}(G) - \chi(G) \leq s$, we take the $s^{th}$ generalized Ohba number of $G$ to be 0.  In 2014 Noel suggested determining the Ohba number of complete bipartite graphs~\cite{N14} which served as the main motivation for this paper.

Suppose $a,b \in \N$ satisfy $a \leq b$.  We let $\tau_0(a,b)$ be the Ohba number of $K_{a,b}$, and we let $\tau_s(a,b)$ be the $s^{th}$ generalized Ohba number of $K_{a,b}$ for each $i \in \N$.  In particular, $\tau_0(a,b)$ is 0 if $K_{a,b}$ is chromatic-choosable; otherwise, $\tau_0(a,b)$ is the smallest $p \in \N$ such that $\chi_{\ell}(K_p \vee K_{a,b}) = p+2 = \chi(K_p \vee K_{a,b})$.  Theorem~\ref{thm: Ohba} allows us to easily deduce the following.

\begin{lem}\label{lem: problem13}
For any $a,b \in \N$ with $a \leq b$, $\tau_0(a,b) \leq \max \{0, a+b-5\}$.
\end{lem}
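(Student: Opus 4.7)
The plan is to apply Theorem~\ref{thm: Ohba} directly after choosing $p = a+b-5$ in the join. I would split into two cases based on whether $\max\{0, a+b-5\}$ is $0$ or $a+b-5$.

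In the small case, I would assume $a+b \leq 5$, so the inequality asserts $\tau_0(a,b) = 0$, i.e., that $K_{a,b}$ is chromatic-choosable. With $a \leq b$, the possibilities are $(a,b) \in \{(1,1),(1,2),(1,3),(1,4),(2,2),(2,3)\}$. Each of $K_{1,b}$ is a star, which is trivially chromatic-choosable; $K_{2,2} = C_4$ is chromatic-choosable; and by Theorem~\ref{thm: listbipartite}, $\chi_\ell(K_{2,3}) = 2$ since $3 < 2^2 = 4$. This handles the small case.

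In the main case, I would assume $a+b \geq 6$ and set $p = a+b-5 \in \N$. Let $G = K_p \vee K_{a,b}$, which has chromatic number $\chi(G) = p + 2 = a+b-3$ and vertex count
\[
|V(G)| = p + a + b = 2(a+b) - 5 = 2(a+b-3) + 1 = 2\chi(G) + 1.
\]
Thus $G$ satisfies the hypothesis of Theorem~\ref{thm: Ohba} with equality, so $G$ is chromatic-choosable. This gives $\chi_\ell(K_p \vee K_{a,b}) = p + 2$, hence $\tau_0(a,b) \leq p = a+b-5$, as desired.

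There is no real obstacle here: the statement is essentially tailored so that the hypothesis $|V(G)| \leq 2\chi(G)+1$ of Ohba's theorem holds with equality for this particular choice of $p$. The only subtlety is not forgetting the base cases with $a+b \leq 5$, where one cannot take $p = a+b-5 \in \N$, so chromatic-choosability must be verified by hand.
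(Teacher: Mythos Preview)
Your proof is correct and follows essentially the same approach as the paper: apply Theorem~\ref{thm: Ohba} to $K_{a+b-5}\vee K_{a,b}$ when $a+b\geq 6$, and handle $a+b\leq 5$ separately. The only minor difference is that the paper handles the small case more uniformly by applying Theorem~\ref{thm: Ohba} directly to $K_{a,b}$ (since $|V(K_{a,b})|=a+b\leq 5=2\chi(K_{a,b})+1$), avoiding your case-by-case verification.
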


\begin{proof}
From Theorem~\ref{thm: Ohba}, $G$ is chromatic-choosable if $|V(G)| \leq 2\chi(G) + 1$. Suppose $a + b > 5$.  If $G = K_{a+b-5} \vee K_{a,b}$, then $\chi(G) = a + b - 3$ and $|V(G)| = 2a + 2b -5$. So, $2\chi(G) + 1 = 2a + 2b - 5 \geq |V(G)|$. Thus, $G$ is chromatic-choosable which means $\tau_0(a,b) \leq a + b - 5$ whenever $a + b > 5$.

Now suppose $a + b \leq 5$, and let $G=K_{a,b}$. Then $\chi(G) = 2$ and $2\chi(G) + 1 = 5 \geq |V(G)|$ which means $G$ is chromatic-choosable. Thus, $\tau_0(a,b) = 0$ whenever $a + b \leq 5$. So, in general, $\tau_0(a,b) \leq$max$\{0, a + b - 5 \}$.
\end{proof}

It is natural to ask whether the upper bound in Lemma~\ref{lem: problem13} can be improved significantly. Allagan showed the following when $a=2$.

\begin{thm}[\cite{A09}]\label{thm: Allagan}
For any $b \geq 5$
$$\left\lfloor \sqrt{b} \right\rfloor - 1 \leq \tau_0(2,b)\leq \left\lceil \frac{-7 + \sqrt{8b + 17}}{2}\right\rceil.$$
\end{thm}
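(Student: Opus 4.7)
My plan is to handle the two bounds separately; throughout I write $k = n + 2$ so that $K_n \vee K_{2,b}$ has clique number and chromatic number $k$.

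\textbf{Lower bound.} Set $k = \lfloor\sqrt{b}\rfloor$, so $b \geq k^2$, and I will construct a $k$-assignment $L$ on $G = K_{k-2} \vee K_{2,b}$ with no proper $L$-coloring. Label the vertices of $K_{k-2}$ as $v_1, \ldots, v_{k-2}$ and the parts of $K_{2,b}$ as $\{a, a'\}$ and $\{b_1, \ldots, b_b\}$, and put $L(v_i) = L(a) = [k]$ and $L(a') = \{k+1, \ldots, 2k\}$. In any proper coloring $f$, the clique $\{v_1, \ldots, v_{k-2}, a\}$ uses $k-1$ distinct colors of $[k]$, missing some $c^* \in [k]$, while $f(a') = c'$ for some $c' \in \{k+1, \ldots, 2k\}$, so the ``dominating palette'' is $P_{c^*,c'} := ([k] \setminus \{c^*\}) \cup \{c'\}$. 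There are exactly $k^2$ such palettes, one per pair $(c^*, c')$, and since $b \geq k^2$ I can assign each $P_{c^*,c'}$ as the $k$-list of a distinct $b_j$ (with any $k$-lists for the remaining $b_j$'s). Then under every proper coloring of $\{v_1, \ldots, v_{k-2}, a, a'\}$, the $b_j$ whose list equals the realized palette has every color already used by a neighbor, so $\chi_\ell(G) > k$, giving $\tau_0(2, b) > k - 2 = \lfloor\sqrt{b}\rfloor - 2$.

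\textbf{Upper bound.} Rewriting $n \geq \lceil (-7 + \sqrt{8b+17})/2 \rceil$ in terms of $k$ yields $b \leq (k^2 + 3k - 2)/2$, and I must show every $k$-assignment $L$ on $G = K_{k-2} \vee K_{2, b}$ admits a proper $L$-coloring. I split on $L(a) \cap L(a')$. If $L(a) \cap L(a') \neq \emptyset$, pick $c$ in this intersection and set $f(a) = f(a') = c$; then color $K_{k-2}$ from the lists $L(v_i) \setminus \{c\}$ of size $\geq k-1 \geq k-2 = \chi_\ell(K_{k-2})$, which is possible by the $(k-2)$-choosability of $K_{k-2}$. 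The resulting palette on the dominating vertices has size $k-1 < k = |L(b_j)|$, so each $b_j$ has an unused color. If $L(a) \cap L(a') = \emptyset$, every proper coloring of $\{v_1, \ldots, v_{k-2}, a, a'\}$ produces a palette $F$ with $|F| = k$, and a given $b_j$ fails iff $L(b_j) = F$. The strategy is to count the palettes achievable as $(c_a, c_{a'})$ ranges over $L(a) \times L(a')$ and the $v_i$'s are properly colored from $L(v_i) \setminus \{c_a, c_{a'}\}$ (always possible since those lists have size $\geq k-2$), and to show this count strictly exceeds $b$, so at least one achievable palette avoids every $L(b_j)$ and extends to a proper $L$-coloring.

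\textbf{Main obstacle.} The palette count in the disjoint-list subcase is the crux. A naive estimate gives only $k^2$ ordered pairs $(c_a, c_{a'})$, but different pairs can produce identical palettes (when $c_a, c_{a'} \in \bigcup_i L(v_i)$) while a single pair can produce many (when $c_a, c_{a'} \notin \bigcup_i L(v_i)$). I expect to carry out a case analysis on the overlap pattern between $L(a), L(a'),$ and $\bigcup_i L(v_i)$, combined with a Hall-type count of systems of distinct representatives among the $v_i$'s, to obtain at least $(k^2 + 3k - 2)/2 + 1$ distinct achievable palettes in every configuration, identifying the extremal list configuration that attains this bound.
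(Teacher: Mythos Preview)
Your lower-bound construction is correct and is exactly the $s=0$ instance of the paper's Theorem~\ref{thm: its6not8}: the paper also gives $W\cup\{x_{s+2}\}$ the list $[l+s+1]$, gives $x_1$ a disjoint block, and assigns to the $y_j$'s all $(l+1)^2$ possible ``palettes'' $I\cup\{c\}$. So for the lower bound there is nothing to add.

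For the upper bound, your proposal is not a proof but a plan whose hard step is left open: you correctly reduce to the disjoint case $L(a)\cap L(a')=\emptyset$ and to showing that the number of achievable palettes on $W\cup\{a,a'\}$ exceeds $b$, but you stop at ``I expect to carry out a case analysis \ldots\ to obtain at least $(k^2+3k-2)/2+1$ distinct achievable palettes.'' No such analysis is given, and the Hall-type count you mention is not set up. As written this is a genuine gap.

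It is also worth noting that the paper does not reprove Allagan's upper bound at all; Theorem~\ref{thm: Allagan} is quoted from~\cite{A09}. What the paper proves instead is strictly stronger: via Lemma~\ref{lem: upperinduction_general} (an inductive generalization of a result of Gravier--Maffray--Mohar) one gets, in the disjoint case, at least $\prod_{i}|L(x_i)|=k^2$ distinct palettes on $W\cup X$, not merely $(k^2+3k-2)/2$. This yields $\tau_0(2,b)\le l$ whenever $b\le (l+2)^2-1$, i.e.\ $\tau_0(2,b)\le\lfloor\sqrt{b}\rfloor-1$, matching the lower bound exactly (Corollary~\ref{cor: improveAllagan}). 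The point is that the palette count is obtained by a short induction on $|W\cup X|$ (remove a vertex $z$, split ranges by whether a fixed color $1\in L(z)$ is used), rather than by the configuration-by-configuration case analysis you propose. That inductive route both avoids the casework you flag as the main obstacle and gives a sharper bound than the one you are aiming for.
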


Notice that Lemma~\ref{lem: problem13} only says $\tau_0(2,b) \leq b-3$; whereas, Theorem~\ref{thm: Allagan} tells us $\tau_0(2,b) = \Theta(\sqrt{b})$ as $b \rightarrow \infty$.  We will improve Theorem~\ref{thm: Allagan} below by showing that $\tau_{0}(2,b) = \lfloor \sqrt{b} \rfloor - 1$ for all $b \geq 2$.

\subsection{Outline of Paper and Open Questions}

Motivated by Noel's 2014 question~\cite{N14}, the overall goal of this paper is to find bounds on $\tau_s(a,b)$ where $a,b \in \N$ with $a \leq b$ and $s$ is a nonnegative integer.  In Section~\ref{Basics} we prove some fundamental facts about $\tau_s(a,b)$ that are used throughout the rest of the paper.  One important result, which has a simple proof, shows that when $a \leq s+2$, we can identify when $\tau_s(a,b) = 0$.

\begin{lem}\label{lem: closeaandi}
Suppose $a,b \in \N$ with $a \leq b$ and $s$ is a nonnegative integer. If $a \leq s+1$, then $\tau_s(a,b) = 0$. Moreover, if $a=s+2$, then $\tau_s(a,b)=0$ if and only if $b < (s+2)^{s+2}$.
\end{lem}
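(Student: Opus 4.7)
The plan is to reduce the lemma to a routine application of Theorem~\ref{thm: listbipartite}. Since $a, b \in \N$ with $a \leq b$, the graph $K_{a,b}$ contains at least one edge, so $\chi(K_{a,b}) = 2$. By the definition of the generalized Ohba number, this means $\tau_s(a,b) = 0$ if and only if $\chi_\ell(K_{a,b}) \leq s+2$. Both halves of the lemma are therefore assertions about $\chi_\ell(K_{a,b})$.

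For the first statement, I would use the standard upper bound $\chi_\ell(K_{a,b}) \leq a+1$ (which also follows from the $k = a$ instance of Theorem~\ref{thm: listbipartite}). A short direct proof: given any $(a+1)$-assignment $L$, color each vertex of the size-$a$ partite set with an arbitrary color from its list (permissible because that side is independent), and then every vertex on the other side sees at most $a$ forbidden colors but has a list of size $a+1$, so a legal choice remains. When $a \leq s+1$, this yields $\chi_\ell(K_{a,b}) \leq a+1 \leq s+2$, giving $\tau_s(a,b) = 0$.

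For the second statement, suppose $a = s+2$. Applying Theorem~\ref{thm: listbipartite} with $k = s+2$ and $t = b$ gives $\chi_\ell(K_{s+2,b}) = s+3$ if and only if $b \geq (s+2)^{s+2}$. Combined with the general upper bound $\chi_\ell(K_{s+2,b}) \leq s+3$, this biconditional is equivalent to
\[
\chi_\ell(K_{s+2,b}) \leq s+2 \iff b < (s+2)^{s+2}.
\]
Using the reformulation from the first paragraph, this is exactly the statement $\tau_s(s+2,b) = 0 \iff b < (s+2)^{s+2}$.

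No real obstacle is anticipated; the entire argument is a direct translation between $\tau_s$ and $\chi_\ell$ followed by invocation of Theorem~\ref{thm: listbipartite}. The only thing worth being careful about is noting that $\chi(K_{a,b}) = 2$ holds (so that the gap $\chi_\ell - \chi$ has the claimed form), and ensuring the upper bound $\chi_\ell(K_{a,b}) \leq a+1$ is available either by the greedy argument above or as an immediate consequence of the cited theorem.
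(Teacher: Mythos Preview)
Your proposal is correct and follows essentially the same approach as the paper: both arguments use the well-known bound $\chi_\ell(K_{a,b}) \leq a+1$ for the first part and invoke Theorem~\ref{thm: listbipartite} for the biconditional in the second part. Your explicit reformulation ``$\tau_s(a,b)=0 \iff \chi_\ell(K_{a,b})\leq s+2$'' streamlines the converse direction slightly (the paper instead appeals to Lemma~\ref{lem: alsoobvious}, which is really just the definition in the case $p=0$), but the substance is identical.
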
  

Lemma~\ref{lem: closeaandi} shows that bounding $\tau_s(a,b)$ becomes interesting when $a \geq s+2$.  In Section~\ref{below} we establish some lower bounds on $\tau_s(a,b)$.  We begin Section~\ref{below} by focusing on $a = s+2$ and proving the following results. 

\begin{thm} \label{thm: its6not8}
Suppose $s$ and $l$ are nonnegative integers. Then $\tau_s(s+2,b) \geq l$ when $b \geq \binom{l+s+1}{l}(l+s+1)^{s+1}$.
\end{thm}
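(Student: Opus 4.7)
The plan is as follows. The case $l=0$ is trivial, so assume $l\geq 1$, set $k=l+s+1$, and write $G=K_{l-1}\vee K_{s+2,b}$. Using the standard fact that $\chi_\ell(K_n\vee H)-n$ is non-increasing in $n$ (which I expect to have been recorded as one of the ``fundamental facts'' in Section~\ref{Basics}), the inequality $\tau_s(s+2,b)\geq l$ reduces to showing that $G$ is not $k$-choosable. I would prove this by exhibiting an explicit $k$-list assignment $L$ for which $G$ has no proper $L$-coloring, assuming $b\geq\binom{k}{l}k^{s+1}$.

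The construction combines one ``shared'' palette with several ``private'' palettes. Let $T=[k]$ and let $P_2,\dots,P_{s+2}$ be pairwise disjoint $k$-element color sets, each disjoint from $T$. Denote the three parts of $G$ by $C=\{c_1,\dots,c_{l-1}\}$, $A=\{a_1,\dots,a_{s+2}\}$, and the size-$b$ part by $W$. Set $L(c_i)=L(a_1)=T$ for each $c_i\in C$, and $L(a_j)=P_j$ for each $j\in[2:s+2]$. For every $l$-subset $S\subseteq T$ and every tuple $(y_2,\dots,y_{s+2})\in P_2\times\cdots\times P_{s+2}$, the set $S\cup\{y_2,\dots,y_{s+2}\}$ has exactly $k$ elements; there are $\binom{k}{l}k^{s+1}\leq b$ such sets, so each may be assigned as the list of a distinct vertex of $W$ (any remaining vertices of $W$ receive arbitrary $k$-sets).

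To finish, I would suppose $f$ were a proper $L$-coloring of $G$ and derive a contradiction. Since $C$ is a clique with all lists equal to $T$, $f(C)$ is an $(l-1)$-subset of $T$, and $f(a_1)\in T\setminus f(C)$; hence $S:=f(C)\cup\{f(a_1)\}$ is an $l$-subset of $T$. For $j\geq 2$ we have $f(a_j)\in P_j$, and pairwise disjointness of $T,P_2,\dots,P_{s+2}$ guarantees that $f(a_2),\dots,f(a_{s+2})$ are automatically distinct from each other, from $f(C)$, and from $f(a_1)$. Thus $f(C)\cup f(A)=S\cup\{f(a_2),\dots,f(a_{s+2})\}$ is one of the $k$-sets enumerated above, so some $u\in W$ has $L(u)$ equal to this set; since $u$ is adjacent to every vertex of $C\cup A$, no color in $L(u)$ is free for $u$, a contradiction.

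The hard part is choosing the lists on $A$ so that every proper $L$-coloring of $C\cup A$ is forced to exhaust exactly $k$ colors---if it used fewer, then each $u\in W$ would still have a free color in its size-$k$ list and the whole construction would collapse. Overlapping $L(a_1)$ with the clique palette $T$ while isolating $L(a_2),\dots,L(a_{s+2})$ in the disjoint blocks $P_2,\dots,P_{s+2}$ accomplishes precisely this, and it puts the possible images $f(C)\cup f(A)$ in bijection with the $\binom{k}{l}\cdot k^{s+1}$ ``bad'' lists placed on $W$, which is exactly why the numerical bound on $b$ in the hypothesis is what this construction requires.
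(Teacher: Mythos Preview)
Your proposal is correct and follows essentially the same construction as the paper: one ``shared'' palette $T=[k]$ on the clique together with one vertex of the size-$(s+2)$ part, $s+1$ pairwise disjoint private palettes on the remaining vertices of that part, and the $\binom{k}{l}k^{s+1}$ resulting ``bad'' $k$-sets placed as lists on the large side. The only cosmetic difference is that the paper separates off the case $l=1$ and handles it by citing Lemma~\ref{lem: closeaandi}, whereas your uniform treatment with an empty clique $C$ covers $l=1$ directly; both are fine.
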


The proof of Theorem~\ref{thm: its6not8} for $l \geq 2$ involves the construction of an $(l+s+1)$-assignment $L$ for $G = K_{l-1} \vee K_{s+2,b}$ such that there is no proper $L$-coloring of $G$.  Hence, $\chi_{\ell}(G) - \chi(G) > s$. 

\begin{cor} \label{cor: its6not8}
Let $s$ be any nonnegative integer. If $b$ satisfies $b \geq s+2$ and $b \geq (s+1)^{2s+2}/(s+1)!$, then $\tau_s(s+2,b) \geq \lfloor ((s+1)!b)^{1/(2s+2)} \rfloor - s-1$. 
\end{cor}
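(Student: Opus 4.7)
The plan is to deduce the corollary directly from Theorem~\ref{thm: its6not8} by choosing $l$ to be as large as possible, namely $l = \lfloor ((s+1)!b)^{1/(2s+2)} \rfloor - s - 1$, and then checking that this $l$ satisfies the hypothesis $b \geq \binom{l+s+1}{l}(l+s+1)^{s+1}$ of that theorem.

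First I would verify that the chosen $l$ is a nonnegative integer, which is where the hypothesis $b \geq (s+1)^{2s+2}/(s+1)!$ comes in: it rearranges to $((s+1)!b)^{1/(2s+2)} \geq s+1$, so $\lfloor ((s+1)!b)^{1/(2s+2)} \rfloor \geq s+1$ and hence $l \geq 0$. (The side condition $b \geq s+2$ just ensures that $K_{s+2,b}$ matches the convention $a \leq b$.)

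The core of the argument is then the key step: applying the standard binomial estimate $\binom{n}{k} \leq n^k/k!$ with $n = l+s+1$ and $k = s+1$ to simplify the awkward expression in Theorem~\ref{thm: its6not8} into a single power. Explicitly,
\[
\binom{l+s+1}{l}(l+s+1)^{s+1} = \binom{l+s+1}{s+1}(l+s+1)^{s+1} \leq \frac{(l+s+1)^{2s+2}}{(s+1)!}.
\]
By the choice of $l$ we have $l+s+1 = \lfloor ((s+1)!b)^{1/(2s+2)} \rfloor \leq ((s+1)!b)^{1/(2s+2)}$, so $(l+s+1)^{2s+2} \leq (s+1)!\,b$, and the right-hand side above is at most $b$. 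Therefore $b \geq \binom{l+s+1}{l}(l+s+1)^{s+1}$, and Theorem~\ref{thm: its6not8} gives $\tau_s(s+2,b) \geq l = \lfloor ((s+1)!b)^{1/(2s+2)} \rfloor - s - 1$.

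There is not really a hard step here; the whole argument is an algebraic optimization once Theorem~\ref{thm: its6not8} is in hand. The only thing to be careful about is extracting a clean lower bound on $l$ from the implicit inequality $b \geq \binom{l+s+1}{l}(l+s+1)^{s+1}$, and the trick is simply to weaken the binomial coefficient to $(l+s+1)^{s+1}/(s+1)!$ so that the inequality becomes a single $(2s+2)$-th power and can be inverted in closed form.
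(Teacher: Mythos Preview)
Your proposal is correct and follows essentially the same approach as the paper: set $l = \lfloor ((s+1)!b)^{1/(2s+2)} \rfloor - s - 1$, check $l \geq 0$ via the hypothesis $b \geq (s+1)^{2s+2}/(s+1)!$, and use the estimate $\binom{l+s+1}{s+1} \leq (l+s+1)^{s+1}/(s+1)!$ together with $(l+s+1)^{2s+2} \leq (s+1)!\,b$ to verify the hypothesis of Theorem~\ref{thm: its6not8}. The paper presents the same chain of inequalities, just written in the reverse direction starting from $b$.
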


We can view Corollary~\ref{cor: its6not8} as a generalization of the lower bound in Theorem~\ref{thm: Allagan} since it is easy to see that it implies  $\tau_{0}(2,b) \geq \lfloor \sqrt{b} \rfloor - 1$ for all $b \geq 2$.  We end Section~\ref{below} by establishing some lower bounds on $\tau_s(a,b)$ for any $a$ satisfying $a > s+2$.  This requires a more intricate construction than the proof of Theorem~\ref{thm: its6not8}, and  the formula for our bound that establishes a lower bound of $l$ depends on whether $(a-s-1)$ divides $(l+s+1)$.  

\begin{thm}\label{thm: r=0}
Suppose $s$, $l$, and $a$ are any nonnegative integers such that $l \geq 2$, $a > s+2,$ and $(a-s-1)$ divides $(l+s+1)$. Let $q=(l+s+1)/(a-s-1)$.
If $b \geq a$ and
$$b \geq  (l+s+1)^s\left(\binom{l+s+1}{l+1} + \binom{l+s+1}{l} q\right),$$
then $\tau_s(a,b) \geq l$.
\end{thm}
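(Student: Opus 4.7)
The plan is to exhibit an explicit $(l+s+1)$-assignment $L$ for $G = K_{l-1} \vee K_{a,b}$ under which $G$ is not $L$-colorable, which gives $\chi_\ell(G) \geq l+s+2$. Since $\chi(G) = l+1$, it will follow that $\chi_\ell(G) - \chi(G) \geq s+1$, hence $\tau_s(a,b) \geq l$. Set $k = l+s+1$ and $n = a-s-1$; the hypothesis $a > s+2$ gives $n \geq 2$, and $(a-s-1) \mid (l+s+1)$ makes $q = k/n$ a positive integer. Partition $[k]$ into $q$ blocks $B_1, \ldots, B_q$ of size $n$. Let $C = \{c_1, \ldots, c_{l-1}\}$ denote the vertices of the clique factor and $A = \{u_1, \ldots, u_a\}$, $B$ denote the partite sets of $K_{a,b}$.

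First, assign $L(c_i) = [k]$ to each clique vertex, so that in any proper $L$-coloring $\phi$ the set $T := \phi(C)$ is an $(l-1)$-subset of $[k]$. Next, split $A$ as $U \cup V$ with $|U| = s+1$ and $|V| = n$, giving each $u \in U$ the list $[k]$ and each $v \in V$ a $k$-list drawn from $[k]$ together with a pool of $q$ auxiliary colors $y_1, \ldots, y_q$. The lists on $V$ are to be chosen so that $\phi(V)$ must take one of two shapes: either (i) $\phi(V) \subseteq \{y_1, \ldots, y_q\}$, contributing no new color of $[k]$; or (ii) $\phi(V) \cap [k]$ is a single color from one specific block $B_p$, accompanied only by auxiliary colors of index $p$. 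The divisibility $n \mid k$ is what makes this block-symmetric design consistent across the $q$ blocks.

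Given this setup, $S_A := \phi(A)$ must fall into one of two canonical forms. In type~(I), $T \cup (S_A \cap [k])$ is an $(l+1)$-subset of $[k]$, of which there are $\binom{k}{l+1}$. In type~(II), $T \cup (S_A \cap [k])$ is an $l$-subset of $[k]$ together with one extra color drawn from a specified block $B_p$, giving $q \binom{k}{l}$ combinations. For each canonical configuration I would place in $B$ a vertex $w$ whose $k$-list is the corresponding $[k]$-subset padded out by those auxiliary colors that $V$ is using in the matching shape, so that $L(w) \subseteq T \cup S_A$ and $w$ admits no valid color. An additional factor of $k^s$ enters from the bookkeeping of how $U$'s colors interact with $T$ and with the $B$-vertex lists, encoded via an ordered $s$-tuple over $[k]$ attached to each configuration. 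In total $k^s\bigl(\binom{k}{l+1} + q\binom{k}{l}\bigr)$ vertices in $B$ suffice, so the lower bound on $b$ produces the desired contradiction with $\phi$ being a proper $L$-coloring.

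The main obstacle is the precise design of the $k$-lists on $V$ and on $B$ so that $\phi(V)$ is always forced into one of shapes (i) or (ii), and so that the auxiliary colors are distributed across the $q$ blocks in a way that allows the $B$-list configurations to cover every $(T, S_A)$ that can occur. The hypothesis $(a-s-1) \mid (l+s+1)$ is essential at this point, as it permits $[k]$ to be partitioned into $q$ equal blocks across which the construction is symmetric; this is what makes the factor $q$ enter the count cleanly (and is also why the case $(a-s-1) \nmid (l+s+1)$ will need a separate treatment).
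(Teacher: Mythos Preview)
Your overall strategy is the right one --- build a bad $(l+s+1)$-assignment for $G=K_{l-1}\vee K_{a,b}$ --- but the specific design you sketch has a genuine gap. You put all $s+1$ vertices of $U$ on the common list $[k]$. Since $U$ is independent and only adjacent to $C$ and $B$, nothing prevents a proper $L$-coloring $\phi$ from giving every vertex of $U$ the \emph{same} color $c\in[k]\setminus T$. In that event $|T\cup\phi(U)|=l$, and if your shape~(i) occurs on $V$ (so $\phi(V)\subseteq\{y_1,\dots,y_q\}$), then $|\phi(C\cup A)|\le l+q$, which for many parameter choices (e.g.\ $q<s+1$) is strictly less than $k=l+s+1$. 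No $k$-element list on a $B$-vertex can then sit inside $\phi(C\cup A)$, so this $\phi$ extends to all of $B$ and your assignment is colorable. The same collapse also breaks your type~(I)/(II) dichotomy: with $\phi(U)$ ranging over subsets of $[k]\setminus T$ of size anywhere from $1$ to $s+1$, the set $T\cup(S_A\cap[k])$ is not forced to have size $l+1$ or $l$, and your ``ordered $s$-tuple over $[k]$'' bookkeeping does not recover the factor $k^s$.

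The paper's construction avoids this by treating the $s+1$ ``extra'' vertices of $A$ asymmetrically: exactly one of them (call it $x_a$) is given the list $[k]$, so that $W\cup\{x_a\}$ is an $l$-clique on the common list $[k]$ and $\phi(W\cup\{x_a\})$ is forced to be an $l$-subset of $[k]$; the remaining $s$ vertices are given pairwise disjoint $k$-element lists drawn from $s$ fresh intervals, which forces $s$ further distinct colors and produces the $k^s$ factor exactly. On the $V$-side the paper partitions $[k]$ into $a-s-1$ blocks of size $q$ (not $q$ blocks of size $a-s-1$ as you wrote), assigns $x_j$ the list $([k]\setminus P_j)$ together with $q$ auxiliary colors, and then argues directly that $\phi(\{x_1,\dots,x_{a-s-1}\})$ must contain at least one color outside $\phi(W\cup\{x_a\})$ and the fresh intervals --- either a new element of $[k]$ (yielding the $\binom{k}{l+1}$ term) or one of the $q$ auxiliary colors (yielding the $q\binom{k}{l}$ term). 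Your proposal gestures at this mechanism but does not pin it down; in particular your shapes~(i)/(ii) are stronger constraints on $\phi(V)$ than any list design can enforce, and you leave the actual $V$-lists unspecified.
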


\begin{thm}\label{thm: improvedgeneral}
Suppose $s$, $l$, and $a$ are any nonnegative integers such that $l \geq 2$, $a > s+2,$ and $(a-s-1)$ does not divide $(l+s+1)$. Let $q=\lfloor(l+s+1)/(a-s-1)\rfloor$ and
$$u = ((1+q)(a-s-1)-(l+s+1))q.$$
If $b \geq a$ and
$$b \geq (l+s+1)^s\left(\binom{l+s+1}{l+1} + \binom{l+s+1}{l} (q+1) - \binom{u}{l}\right),$$
then $\tau_s(a,b) \geq l$. 
\end{thm}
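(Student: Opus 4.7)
The plan is to mirror the construction used in the proof of Theorem~\ref{thm: r=0}, modifying it to handle the obstruction that $(a-s-1)$ does not divide $(l+s+1)$. Since $\chi(G) = l+1$ for $G = K_{l-1} \vee K_{a,b}$, it suffices to exhibit an $(l+s+1)$-assignment $L$ for $G$ that admits no proper $L$-coloring; this will yield $\chi_\ell(G) \geq l+s+2$, so that $\chi_\ell(G) - \chi(G) > s$ and hence $\tau_s(a,b) \geq l$.

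I would first introduce a ``public'' pool $P = [l+s+1]$ to be shared by the lists on $A \cup B$, together with a separate private pool used only by vertices of $C$. The lists on the $l-1$ vertices of $C$ are arranged so that, in any proper $L$-coloring, the colors chosen on $C$ select an $s$-tuple of extension colors appearing in certain lists of $B$; this indexes the $(l+s+1)^s$ factor appearing in the bound. The lists on the $a$ vertices of $A$ are then defined via a cover of $P$ by $q+1$ blocks of size $a-s-1$. In the divisible case of Theorem~\ref{thm: r=0} these blocks can be chosen disjoint and exactly partition $P$; in the present setting with $q = \lfloor(l+s+1)/(a-s-1)\rfloor$, the $q+1$ blocks must overlap, and the parameter $u = ((1+q)(a-s-1)-(l+s+1))q$ measures the total overlap across the $q$ ``full'' blocks. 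Every proper $L$-coloring of $A$ either uses all of $P$ or omits a single element that can be attributed to one of the $q+1$ blocks.

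The vertices of $B$ would then be constructed in two families. For each $(l+1)$-subset $T_1 \subseteq P$ and each $s$-tuple from the $C$-extension pool, I include one $B$-vertex with list $T_1$ extended by the tuple; this accounts for the $\binom{l+s+1}{l+1}$ term. For each $l$-subset $T_2 \subseteq P$ and each $s$-tuple, I include up to $q+1$ $B$-vertices, one per block of $P$ whose associated coloring of $A$ could omit the unique missing element of $T_2$, giving the $\binom{l+s+1}{l}(q+1)$ term. The subtraction of $\binom{u}{l}$ then removes the duplicate $B$-vertices associated with $l$-subsets contained entirely in the overlap region of the $q$ full blocks, which correspond to strictly fewer than $q+1$ distinct blocking configurations. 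Any proper $L$-coloring of $G[C \cup A]$ should then match one of the enumerated configurations, and the corresponding $B$-vertex is blocked, contradicting $L$-colorability of $G$.

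The main obstacle is verifying that every proper $L$-coloring of $G[C \cup A]$ is indeed blocked by one of the constructed $B$-vertices while the count is reduced by exactly $\binom{u}{l}$. This requires a careful case analysis of how the independent set $A$ distributes its colors across the overlapping blocks of $P$ relative to the colors used by $C$, together with an inclusion-exclusion argument to identify which $l$-subsets lie inside the overlap and hence produce duplicate blocking configurations. The divisible case handled in Theorem~\ref{thm: r=0} is the parallel argument with disjoint blocks and no overlap correction; the added difficulty here is to account accurately for the extra $\binom{l+s+1}{l} - \binom{u}{l}$ blocking configurations per $s$-tuple produced when $(a-s-1) \nmid (l+s+1)$.
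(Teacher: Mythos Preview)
Your proposal has the right overall shape (build a bad $(l+s+1)$-assignment for $K_{l-1}\vee K_{a,b}$ and invoke Lemma~\ref{lem: alsoobvious}), but the construction you sketch does not match the mechanism that actually produces the bound, and several of the structural claims are incorrect.

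First, the $(l+s+1)^s$ factor cannot come from the $l-1$ clique vertices. In the paper's construction the clique $W$ together with one distinguished vertex $x_a\in X$ all receive the \emph{public} list $[l+s+1]$; being a clique of size $l$, any proper coloring of $W\cup\{x_a\}$ picks an $l$-subset of $[l+s+1]$, which is the source of the $\binom{l+s+1}{l}$ and $\binom{l+s+1}{l+1}$ terms. The factor $(l+s+1)^s$ instead comes from $s$ further vertices $x_{a-s},\ldots,x_{a-1}$ of the partite set $X$, each given its own pairwise-disjoint $(l+s+1)$-element list. Your proposal assigns a private pool to the clique and tries to extract an $s$-tuple from its $l-1$ vertices, which cannot work.

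Second, the lists on the remaining $a-s-1$ vertices of $X$ are built from a \emph{partition} of $[l+s+1]$ into $a-s-1$ parts (the first $r$ of size $q+1$, the rest of size $q$), with $L(x_j)=([l+s+1]\setminus P_j)\cup E_j$ for a small set $E_j$ of fresh ``escape'' colors. This is not a cover of $P$ by $q+1$ blocks of size $a-s-1$. Consequently your assertion that ``every proper $L$-coloring of $A$ either uses all of $P$ or omits a single element'' is unfounded: $A=X$ is independent, and nothing forces it to use many colors of $P$. The actual dichotomy is that some $x_j$ with $j\le a-s-1$ must receive either an escape color or a color in $[l+s+1]\setminus f(W\cup\{x_a\})$; this is what generates the $T$ and $Z$ families of $Y$-lists.

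Finally, your interpretation of $u$ as ``total overlap across the $q$ full blocks'' is wrong. In the paper $u=(a-s-1-r)q$ is the size of the union of the \emph{small} parts $P_{r+1},\ldots,P_{a-s-1}$ (which are disjoint), and the subtraction of $\binom{u}{l}$ removes those $Y$-lists of the form $I\cup\{h+q+1\}$ with $I$ an $l$-subset contained entirely in that union: for such $I$ the top escape color $h+q+1$ is unavailable to every $x_j$ whose $P_j$ could meet $I$, so the corresponding configuration never arises and the list need not be planted. Your inclusion--exclusion picture does not capture this, and without it the count and the blocking argument do not line up.
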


It is easy to verify that Corollary~\ref{cor: its6not8} along with Theorems~\ref{thm: r=0} and~\ref{thm: improvedgeneral} imply that for fixed $a \geq s+2$, $\tau_s(a,b) = \Omega(b^{1/(2s+2)})$ as $b \rightarrow \infty$.  This observation is related to Question~\ref{ques: asym} below.

Finally, in Section~\ref{above} we focus on establishing upper bounds on $\tau_s(a,b)$ when $a = s+2$.  For our first result of Section~\ref{above} (Theorem~\ref{thm: half}), we prove a lemma that generalizes a theorem of Gravier, Maffray, and Mohar~\cite{GM03} which may be of independent interest (see Lemma~\ref{lem: upperinduction_general} below). Theorem~\ref{thm: half} shows that the bound in Theorem~\ref{thm: its6not8} is best possible when $s=0$.

\begin{thm}\label{thm: half}
Suppose $s$ is a nonnegative integer and $l \in \N$. Then $\tau_{s}(s+2,b) \leq l-1$ whenever $b \leq (l+s+1)^{s+2} -1$.
\end{thm}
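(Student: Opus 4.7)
The plan is to prove Theorem~\ref{thm: half} by first establishing Lemma~\ref{lem: upperinduction_general}, a generalization of a theorem of Gravier, Maffray, and Mohar, and then applying it. I expect the lemma to assert that for every $n \ge 0$ and $k \ge 2$, a list assignment $L$ on $K_n \vee K_{k,b}$ with $|L(v)| \ge n+k$ for all $v$ admits a proper $L$-coloring whenever $b \le (n+k)^k - 1$. Substituting $n = l-1$ and $k = s+2$ then recovers Theorem~\ref{thm: half} directly.

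To prove the lemma, let $G = K_n \vee K_{k,b}$ and set $H := G[U \cup A]$, where $U = V(K_n)$ and $A = \{a_1, \dots, a_k\}$ is the size-$k$ part of $K_{k,b}$; let $B$ denote the remaining size-$b$ independent set, so that $H = K_n \vee \overline{K_k}$. For any proper $L$-coloring $\phi$ of $H$, let $X = \phi(U \cup A)$. Then $\phi$ extends to a proper $L$-coloring of $G$ if and only if $L(b) \not\subseteq X$ for every $b \in B$. Since $|L(b)| = n+k$, this is automatic when $|X| < n+k$ and equivalent to $X \neq L(b)$ when $|X| = n+k$.

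I would then split into two cases. In Case 1, some proper $L$-coloring of $H$ uses fewer than $n+k$ colors (i.e., two of the $a_j$'s share a color); such a coloring extends at once and we are done. In Case 2, every proper $L$-coloring of $H$ uses exactly $n+k$ colors, which is equivalent to requiring $L(a_j) \cap L(a_{j'}) \subseteq C$ for every SDR $C$ of the $L(u_i)$'s and every $j \neq j'$. A short Hall-type argument (using $|L(u_i) \setminus \{d\}| \ge n+k-1 \ge n$ for any single color $d$) shows that under $n \ge 1$ and $k \ge 2$ no color is forced into every SDR, so Case 2 in fact forces the sets $L(a_1), \dots, L(a_k)$ to be pairwise disjoint. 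I would then count: each of the $(n+k)^k$ tuples $(d_1, \dots, d_k) \in L(a_1) \times \cdots \times L(a_k)$ automatically has distinct entries, and extends by Hall's theorem to an SDR $C$ with $c_i \in L(u_i) \setminus \{d_1, \dots, d_k\}$ (applying Hall's to $n$ sets of size $\ge n$). The resulting color set $X = C \cup \{d_1, \dots, d_k\}$ has size $n+k$, and the combinatorial heart of the argument is to verify that the number of distinct achievable $X$'s is at least $(n+k)^k$. Since there are at most $b \le (n+k)^k - 1$ forbidden color sets $L(b)$, pigeonhole then forces some achievable $X$ to avoid them all.

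The main obstacle is establishing the sharp lower bound of $(n+k)^k$ distinct achievable color sets in Case 2. In the cleanest subcase, where the $L(a_j)$'s are disjoint from the $L(u_i)$'s as well, the $A$-tuple is uniquely recoverable from $X$ as $X \cap \bigcup_j L(a_j)$, so distinct tuples yield distinct color sets and the count is immediate. In the general situation, overlaps between the $L(a_j)$'s and the $L(u_i)$'s permit SDR colors to live in some $L(a_j)$, so distinct tuples may produce the same $X$; handling this will require a more delicate counting argument (for example, counting pairs $(C,(d_j))$ and bounding the multiplicity with which each $X$ arises) to still recover the required bound $(n+k)^k$.
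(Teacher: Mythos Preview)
Your overall architecture matches the paper's exactly: reduce to the case where $L(a_1),\dots,L(a_k)$ are pairwise disjoint (the paper isolates this as Lemma~\ref{lem: non-empty_General}, which is your Case~1), then show that $H=G[U\cup A]$ admits at least $(n+k)^k$ distinct ranges, and pigeonhole against the $b\le (n+k)^k-1$ lists on $B$. You have also correctly located the crux of the argument.

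However, you have misidentified the content of Lemma~\ref{lem: upperinduction_general}. It is not the colorability statement for $K_n\vee K_{k,b}$; it is precisely the range-counting lemma, stated for an \emph{arbitrary} $n$-vertex graph $G$: if $\{x_1,\dots,x_s\}\subseteq V(G)$, $|L(v)|\ge n$ for all $v$, and $L(x_1),\dots,L(x_s)$ are pairwise disjoint, then
\[
|\{f(V(G)):f\text{ a proper $L$-coloring of }G\}|\ \ge\ \prod_{i=1}^{s}|L(x_i)|.
\]
Applied with $G=H$ (so $n\leftarrow n+k$) and $s\leftarrow k$, this gives your target $(n+k)^k$ directly.

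The paper does \emph{not} prove this by bounding multiplicities. It inducts on $|V(G)|$: choose $z\notin\{x_i\}$ and a color $c\in L(z)$ (which lies in at most one $L(x_i)$, say $L(x_1)$), and split ranges into those coming from colorings with $f(z)=c$ (call this $A$) and those avoiding $c$ entirely (call this $B$). Deleting $z$ and the color $c$ drops both $|V|$ and all list sizes by one, so induction gives $|A|\ge (|L(x_1)|-1)\prod_{i\ge 2}|L(x_i)|$. For $|B|$ one greedily colors $V(G)\setminus\{x_i\}$ avoiding $c$ and then fills in the $x_i$'s; an elementary product inequality (Lemma~\ref{lem: weighted}) shows this contributes at least $\prod_{i\ge 2}|L(x_i)|$. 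Summing gives the bound.

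Your proposed multiplicity route is the genuine gap, and the naive version does not close it. With one SDR per tuple you get only $(n+k)^k$ colorings, while a single range $X$ can already absorb many of them: if, say, $L(u_i)=L(a_1)$ for all $i$, then each range has multiplicity $(n+1)\cdot n!$, and dividing leaves you far short of $(n+k)^k$. Even counting all SDRs per tuple, you would need $\min_{(d_j)}\#\{\text{SDRs}\}\ge \max_X m(X)$, which is a permanent-vs-permanent comparison with no evident reason to hold. The only place the paper uses a multiplicity argument is Lemma~\ref{lem: baseCases:(}, and that works solely because $n=1$ forces $m(X)\le 2$; for $n\ge 2$ no uniform bound is available, and you would essentially have to rediscover the inductive decomposition above.
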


Theorems~\ref{thm: its6not8} and~\ref{thm: half} allow us to find the exact value of $\tau_{s}(s+2,b)$ for some small values of $s$ and appropriate $b$.  For example, $\tau_1(3,b) = 2$ whenever $96 \leq b \leq 124$ and $\tau_2(4,b) = 2$ whenever $1250 \leq b \leq 1295$.  Most importantly, these Theorems allow us to improve Theorem~\ref{thm: Allagan} in the strongest possible sense.  

\begin{cor} \label{cor: improveAllagan}
For any $b \geq 2$, $\tau_0(2,b) = \lfloor\sqrt{b}\rfloor - 1.$
\end{cor}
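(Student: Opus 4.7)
The plan is simply to combine the lower and upper bounds already developed in the paper, specialized to $s=0$ and $a=s+2=2$, and verify that they coincide.

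For the lower bound, I would apply Corollary~\ref{cor: its6not8} with $s=0$. The hypotheses become $b \geq 2$ and $b \geq 1^{2}/1! = 1$, both of which hold for any $b \geq 2$, and the conclusion reads
$$\tau_0(2,b) \geq \lfloor (1!\cdot b)^{1/2}\rfloor - 1 = \lfloor \sqrt{b}\rfloor - 1.$$
So no new work is needed here; it is just a direct specialization.

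For the matching upper bound, I would invoke Theorem~\ref{thm: half} with $s=0$, setting $l = \lfloor \sqrt{b}\rfloor$. The hypothesis to check is $b \leq (l+1)^{2}-1$. Since $l = \lfloor \sqrt{b}\rfloor$ gives $l \leq \sqrt{b} < l+1$, we get $l^{2}\leq b < (l+1)^{2}$, i.e.\ $b \leq (l+1)^{2}-1$, as required. Theorem~\ref{thm: half} then yields
$$\tau_0(2,b) \leq l - 1 = \lfloor \sqrt{b}\rfloor - 1.$$

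Combining the two inequalities gives equality. I would also briefly note the small-$b$ sanity checks: for $b \in \{2,3\}$ the formula gives $0$, consistent with Lemma~\ref{lem: problem13} (since $a+b \leq 5$), and for $b=4$ it gives $1$, consistent with Theorem~\ref{thm: listbipartite} which forces $\chi_{\ell}(K_{2,4}) = 3 > \chi(K_{2,4})$. There is essentially no obstacle here — the corollary is a bookkeeping consequence of the two main theorems of Sections~\ref{below} and~\ref{above}; the real work was already done in proving Corollary~\ref{cor: its6not8} and Theorem~\ref{thm: half}.
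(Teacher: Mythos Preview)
Your proof is correct and takes essentially the same approach as the paper. The paper routes through Corollary~\ref{thm: decent_sandwich} (itself an immediate consequence of Corollary~\ref{cor: its6not8} and Theorem~\ref{thm: half}) and then checks the arithmetic identity $\lceil \sqrt{b+1}\rceil = \lfloor \sqrt{b}\rfloor + 1$; you instead apply Theorem~\ref{thm: half} directly with $l = \lfloor \sqrt{b}\rfloor$, which is the same argument with one layer of packaging removed.
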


We end Section~\ref{above} by developing some further ideas that allow us to determine precisely when $\tau_{s}(s+2,b)$ equals 1.  This result along with Lemma~\ref{lem: closeaandi} means we can fully characterize when $\tau_{s}(s+2,b)$ is 0 or 1.

\begin{thm}\label{cor: l=1}
For each nonnegative integer $s$, $\tau_{s}(s+2,b) = 1$ if and only if $(s+2)^{s+2} \leq b \leq \binom{s+3}{2}(s+3)^{s+1}-1$.
\end{thm}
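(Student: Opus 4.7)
The plan is to prove both directions of the biconditional; the forward direction is immediate from earlier results, while the reverse requires a counting argument sharper than Theorem~\ref{thm: half} at $l = 2$. For the forward direction, assume $\tau_s(s+2,b) = 1$: since $\tau_s(s+2,b) \geq 1$, Lemma~\ref{lem: closeaandi} gives $b \geq (s+2)^{s+2}$; since $\tau_s(s+2,b) < 2$, the contrapositive of Theorem~\ref{thm: its6not8} with $l = 2$ gives $b \leq \binom{s+3}{2}(s+3)^{s+1} - 1$.

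For the converse, I must show $\tau_s(s+2,b) \leq 1$ throughout $(s+2)^{s+2} \leq b \leq \binom{s+3}{2}(s+3)^{s+1} - 1$. Applying Theorem~\ref{thm: half} with $l = 2$ only covers $b \leq (s+3)^{s+2} - 1$, which falls short of the target by a factor of roughly $(s+2)/2$, so a sharper argument tailored to $l = 2$ is needed. Writing $k = s+3$, it suffices to show $G = K_1 \vee K_{s+2,b}$ is $k$-choosable. Fix a $k$-assignment $L$ on $V(G) = \{w\} \cup A \cup B$ with $|A| = k - 1$; a proper $L$-coloring corresponds to a pair $(c_w, f)$ with $c_w \in L(w)$ and $f(a) \in L(a) \setminus \{c_w\}$ for each $a \in A$, satisfying $L(b) \not\subseteq \{c_w\} \cup f(A)$ for every $b \in B$.

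I split on whether the lists $(L(a))_{a \in A}$ are pairwise disjoint. If some $L(a_1), L(a_2)$ share a color $c$, pick $c_w \in L(w) \setminus \{c\}$ (nonempty since $k \geq 3$), set $f(a_1) = f(a_2) = c$, and pick $f(a) \in L(a) \setminus \{c_w\}$ arbitrarily for other $a$; then $|\{c_w\} \cup f(A)| \leq k - 1 < k = |L(b)|$, so every $b \in B$ has a free color. Otherwise the $L(a)$'s are pairwise disjoint, and every valid $f$ is automatically injective. The total number of valid pairs is
\[
\sum_{c_w \in L(w)} \prod_{a \in A} |L(a) \setminus \{c_w\}| \;\geq\; k(k-1)k^{k-2} \;=\; (k-1)k^{k-1},
\]
since at most one $L(a)$ contains $c_w$. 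Each $b \in B$ ``blocks'' only pairs with $\{c_w\} \cup f(A) = L(b)$; this equation forces $L(b) \setminus \{c_w\}$ to have exactly one element in each $L(a_i)$, and writing $Y = L(b) \setminus \bigcup_a L(a)$, this happens only when either (i) $|Y| = 0$ and exactly one $L(a_j)$ meets $L(b)$ in two elements with $c_w$ one of them (at most 2 choices), or (ii) $|Y| = 1$ and $c_w$ is its unique element (1 choice). Hence each $b$ blocks at most 2 pairs, giving at most $2b \leq (k-1)k^{k-1} - 2$ blocked pairs---strictly fewer than the valid pairs---so some pair $(c_w, f)$ is unblocked and yields a proper $L$-coloring.

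The main obstacle is the structural bound of at most 2 blocked pairs per $b \in B$ in the disjoint-lists case; this exact factor of 2 matches $2b$ against $(k-1)k^{k-1} = 2\binom{k}{2}k^{k-2}$ at the threshold, so any weakening of the bound would fail to reach the lower bound from Theorem~\ref{thm: its6not8}.
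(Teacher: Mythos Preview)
Your proof is correct and follows essentially the same approach as the paper. The forward direction is identical, and for the converse both arguments reduce to the disjoint-lists case (the paper via Lemma~\ref{lem: non-empty_General}, you inline) and then exploit the key fact that any fixed $k$-set can arise as $\{c_w\}\cup f(A)$ in at most two ways when the $L(a)$'s are pairwise disjoint. The paper packages this as Lemma~\ref{lem: baseCases:(} (bounding $|\mathcal{R}(G[W\cup X],L')|$ from below by showing each range has at most two preimage colorings, argued by contradiction), whereas you phrase it as ``each $b\in B$ blocks at most two pairs'' via a direct case split on $|Y|$; the two formulations are equivalent and yield the same threshold $2\binom{k}{2}k^{k-2}=(k-1)k^{k-1}$.
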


With Theorem~\ref{cor: l=1} in mind, one can notice by plugging in $1$ for $l$ that Theorem~\ref{thm: half} is not tight.  We conjecture that Theorem~\ref{thm: half} is far from tight as $l$ increases, and in fact Theorem~\ref{thm: its6not8} gives the best possible result.

\begin{conj} \label{conj: tight}
Suppose $l \in \N$ and $s$ is a nonnegative integer.  Then, $\tau_{s}(s+2,b) = l$ if and only if $\binom{l+s+1}{l}(l+s+1)^{s+1} \leq b \leq \binom{l+s+2}{l+1}(l+s+2)^{s+1}-1$. 
\end{conj}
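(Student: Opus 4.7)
The plan is to prove the conjecture by induction on $l$, with Theorem~\ref{cor: l=1} providing the base case $l=1$ and Theorem~\ref{thm: listbipartite} (via Lemma~\ref{lem: closeaandi}) handling $l=0$. The lower-bound direction, $\tau_s(s+2,b) \geq l$ whenever $b \geq \binom{l+s+1}{l}(l+s+1)^{s+1}$, is already proved by Theorem~\ref{thm: its6not8}, so the remaining task is the matching upper bound: every $(l+s+2)$-assignment $L$ on $G = K_l \vee K_{s+2,b}$ admits a proper $L$-coloring whenever $b \leq \binom{l+s+2}{l+1}(l+s+2)^{s+1} - 1$.

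Fix such a $b$ and such an $L$, and write $X = V(K_l)$ together with the bipartition $A \cup B$ of $K_{s+2,b}$, so that $|A|=s+2$ and $|B|=b$. The first observation is that $G[X \cup A]$ is a complete split graph, and any proper $L$-coloring of $G[X \cup A]$ whose color set $T$ has size at most $l+s+1$ extends automatically to $B$, because each $v \in B$ satisfies $|L(v) \setminus T| \geq 1$. Hence the only difficulty is the case where \emph{every} proper $L$-coloring of $G[X \cup A]$ uses all $l+s+2$ colors, which (since $X$ always contributes $l$ distinct colors disjoint from those used on $A$) forces the $s+2$ vertices of $A$ to receive pairwise distinct colors in every proper $L$-coloring. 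A natural sub-case is when the lists $\{L(a) : a \in A\}$ are actually pairwise disjoint, giving the cleanest enumeration; the general case should then be reduced to it by ``merging'' pairs $a_1,a_2 \in A$ with $L(a_1) \cap L(a_2) \neq \emptyset$, assigning both a common color, and invoking the inductive hypothesis on the residual problem.

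Once we are in the hardest sub-case, the second step is a pigeonhole argument. Let $\mathcal{T}$ denote the family of color sets $T$ arising as $f(X \cup A)$ for some proper $L$-coloring $f$ of $G[X \cup A]$. A vertex $v \in B$ blocks $T$ only when $L(v) = T$, so if $|\mathcal{T}| > |B|$, some $T \in \mathcal{T}$ is not equal to any $L(v)$, and the corresponding partial coloring extends to a proper $L$-coloring of $G$. Since $|B| \leq \binom{l+s+2}{l+1}(l+s+2)^{s+1} - 1$, the goal becomes the structural lower bound
\[
|\mathcal{T}| \;\geq\; \binom{l+s+2}{l+1}(l+s+2)^{s+1}.
\]

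The main obstacle is precisely establishing this lower bound on $|\mathcal{T}|$. The suggestive factorization $\binom{l+s+2}{s+1}(l+s+2)^{s+1}$ indicates that one should try to biject achievable $T$'s with a choice of an $(s+1)$-subset of some canonical color palette together with an $(s+1)$-tuple of ``assignment parameters,'' mirroring the extremal construction behind Theorem~\ref{thm: its6not8}; however, verifying that \emph{every} admissible $L$ (not merely the extremal $L$ of the lower-bound construction) produces at least this many achievable color sets is delicate, because perturbations of $L$ can both create and destroy elements of $\mathcal{T}$. I expect the cleanest path is to interleave the induction on $l$ with the pigeonhole step: any overlap among the $A$-lists triggers a merge-and-reduce to a smaller Ohba-number instance handled by the inductive hypothesis, while complete disjointness among the $A$-lists triggers the direct counting argument. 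Matching the threshold exactly, rather than up to a constant factor, is the crux of the conjecture, and it is the step where I would expect the proof either to succeed through a careful bijective enumeration or to fail and reveal a hidden term that improves the lower-bound construction instead.
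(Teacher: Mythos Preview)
The statement you are attempting to prove is labeled \emph{Conjecture} in the paper, and the paper does \emph{not} provide a proof of it; it is posed as an open problem. Consequently there is no ``paper's own proof'' to compare your proposal against. What the paper does establish is exactly the partial information you cite: the lower bound (Theorem~\ref{thm: its6not8}) for all $l$, and the matching upper bound only for $l=1$ (Theorem~\ref{cor: l=1}, via Lemma~\ref{lem: baseCases:(}). For general $l$ the paper proves only the weaker upper bound of Theorem~\ref{thm: half}, obtained from the range-counting inequality $|\mathcal{T}|\geq (l+s+2)^{s+2}$ of Lemma~\ref{lem: upperinduction_general}.

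Your outline tracks the paper's architecture faithfully, and you have correctly isolated the missing ingredient: a proof that, when the lists on the small part $A$ are pairwise disjoint, the number of achievable color sets on $X\cup A$ satisfies $|\mathcal{T}|\geq \binom{l+s+2}{l+1}(l+s+2)^{s+1}$. That bound is precisely what the paper leaves open for $l\geq 2$, and your proposal does not supply an argument for it---you explicitly flag it as the point where the proof ``either succeeds \ldots\ or fails.'' So the proposal is an accurate diagnosis of the problem, not a proof.

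One structural remark: the induction on $l$ you sketch is not doing real work. The overlap case (some $L(a_i)\cap L(a_j)\neq\emptyset$) is disposed of outright by Lemma~\ref{lem: non-empty_General}, with no appeal to smaller $l$; and in the disjoint case the inductive hypothesis on $l-1$ gives you nothing, because the graph and the list sizes do not shrink. If the conjecture is to be settled by a counting argument of the type you describe, the needed induction is the one inside Lemma~\ref{lem: upperinduction_general} (peeling off a vertex of $X$ and controlling how $\mathcal{T}$ splits), sharpened so that the recursion produces $\binom{l+s+2}{l+1}(l+s+2)^{s+1}$ rather than $(l+s+2)^{s+2}$. The $l=1$ argument in Lemma~\ref{lem: baseCases:(} succeeds by bounding the multiplicity of the map $f\mapsto f(V(G))$; extending that multiplicity bound to larger $l$ is where the genuine difficulty lies.
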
  

It would be interesting to find upper bounds on $\tau_s(a,b)$ when $a > s+2$.  So, we propose this as a topic of future research.  More specifically, we wonder if the bounds in Theorems~\ref{thm: r=0} and~\ref{thm: improvedgeneral} are asymptotically best possible.  This leads to the following question.

\begin{ques} \label{ques: asym}
Suppose that $s$ is a nonnegative integer and $a \in \N$ satisfies $a \geq s+2$.  Is it the case that $\tau_s(a,b) = \Theta(b^{1/(2s+2)})$ as $b \rightarrow \infty$?
\end{ques}

Note that Corollary~\ref{cor: improveAllagan} implies that the answer to Question~\ref{ques: asym} is yes when $s=0$ and $a=2$.  Moreover, if Conjecture~\ref{conj: tight} is correct, the answer to Question~\ref{ques: asym} would be yes whenever $s$ is a nonnegative integer and $a = s+2$.

\section{Basic Results}\label{Basics}

In this section we prove some basic facts about $\tau_s(a,b)$.  We begin by justifying a property used frequently: for $a,b \in \N$ with $a \leq b$ and $s$ a nonnegative integer, if $\chi_{\ell}(K_p \vee K_{a,b}) - \chi(K_p \vee K_{a,b}) > s$, then $\tau_s(a,b) > p$.  Before we prove this, we need two lemmas.

\begin{lem}\label{lem: difference}
For any graph $G$, $\chi_{\ell}(K_1 \vee G) - \chi(K_1 \vee G) \leq \chi_{\ell}(G) - \chi(G)$.
\end{lem}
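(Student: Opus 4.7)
The plan is to show directly that $\chi_{\ell}(K_1 \vee G) \leq \chi_{\ell}(G) + 1$, because combining this with the standard identity $\chi(K_1 \vee G) = \chi(G) + 1$ (since the apex vertex must receive a color not used in $G$) immediately yields the desired inequality after subtracting.

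To establish the bound on the list chromatic number, I would set $k = \chi_{\ell}(G)$, let $v$ denote the apex vertex of $K_1 \vee G$, and take an arbitrary $(k+1)$-assignment $L$ for $K_1 \vee G$. The strategy is the natural greedy one: pick any color $c \in L(v)$, assign it to $v$, and then show the remaining graph $G$ can be properly list-colored from the trimmed lists. Specifically, define $L'(u) = L(u) \setminus \{c\}$ for each $u \in V(G)$; because $|L(u)| = k+1$, each $L'(u)$ has size at least $k$. Choose a $k$-subset $L''(u) \subseteq L'(u)$ for every $u \in V(G)$, so that $L''$ is a $k$-assignment for $G$. Since $G$ is $k$-choosable, there exists a proper $L''$-coloring of $G$, and by construction none of those colors equals $c$. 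Combining this coloring with the color $c$ at $v$ yields a proper $L$-coloring of $K_1 \vee G$, proving $(k+1)$-choosability.

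There is essentially no obstacle here; the only small point requiring a line of care is noting that $L'(u)$ may have size $k$ or $k+1$, so one must pass to a $k$-subset in order to quote $k$-choosability in its standard (equal-size) form. The entire argument is a couple of sentences, and I would present it in roughly that length in the paper.
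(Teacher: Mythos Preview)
Your argument is correct and essentially identical to the paper's own proof: both color the apex vertex first with an arbitrary color $c$, remove $c$ from every other list, and invoke $\chi_\ell(G)$-choosability of $G$ on the trimmed lists, then combine with $\chi(K_1\vee G)=\chi(G)+1$. The only cosmetic difference is that you explicitly shrink each $L'(u)$ to a $k$-subset before invoking choosability, whereas the paper uses the (equivalent) convention that lists of size at least $k$ suffice.
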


\begin{proof}
Suppose $\chi(G) = m$ and $d = \chi_{\ell}(G) - \chi(G)$. Let $H = K_1 \vee G$. Then $\chi(H) = m+1$ and $\chi_{\ell}(G) = m + d$.  Suppose $w$ is the vertex of the copy of $K_1$ used to form $H$. Suppose $L$ is an arbitrary $(m+d+1)$-assignment for $H$.  Color $w$ with some $c \in L(w)$. For each $v \in V(G)$ let $L'(v) = L(v) - \{c\}$. Clearly $|L'(v)| \geq m + d$. Since $\chi_{\ell}(G) = m + d$, there is a proper $L'$-coloring $f$ of $G$. Coloring the vertices of $G$ according to $f$ completes a proper $L$-coloring of $H$.  This means $\chi_{\ell}(H) \leq m+d+1$, and the result follows.
\end{proof}

\begin{lem}\label{lem: alsoinduction}
Suppose $a,b \in \N$ and both $r$ and $s$ are nonnegative integers such that $\chi_{\ell}(K_r \vee K_{a,b}) - \chi(K_r \vee K_{a,b}) \leq s$. Then for any integer $l \geq r$, $\chi_{\ell}(K_l \vee K_{a,b}) - \chi(K_l \vee K_{a,b}) \leq s$.
\end{lem}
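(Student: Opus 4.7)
The plan is to proceed by induction on $l - r$, leveraging the previous lemma (Lemma~\ref{lem: difference}) and the associativity of the join operation.

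For the base case $l = r$, the conclusion is exactly the hypothesis, so there is nothing to prove. For the inductive step, I would suppose the statement holds for some $l \geq r$ and show it for $l+1$. The key observation is the identity $K_{l+1} \vee K_{a,b} = K_1 \vee (K_l \vee K_{a,b})$, which follows directly from the definition of the join (adding one more universal vertex to a graph with $l$ universal vertices yields the same result as joining with $K_{l+1}$).

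Setting $G = K_l \vee K_{a,b}$ and applying Lemma~\ref{lem: difference} gives
\[
\chi_{\ell}(K_{l+1} \vee K_{a,b}) - \chi(K_{l+1} \vee K_{a,b}) = \chi_{\ell}(K_1 \vee G) - \chi(K_1 \vee G) \leq \chi_{\ell}(G) - \chi(G),
\]
and the right-hand side is at most $s$ by the inductive hypothesis. This completes the induction.

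There is no real obstacle here; this statement is essentially a packaging of Lemma~\ref{lem: difference} into a form convenient for later use. The only thing to be careful about is correctly identifying $K_l \vee K_{a,b}$ as the graph $G$ in the previous lemma (rather than, say, $K_{a,b}$), so that each application of Lemma~\ref{lem: difference} reduces the number of apex vertices by exactly one and the difference $\chi_{\ell} - \chi$ is non-increasing along the chain $K_r \vee K_{a,b}, K_{r+1} \vee K_{a,b}, \ldots, K_l \vee K_{a,b}$.
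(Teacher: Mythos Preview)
Your proposal is correct and essentially identical to the paper's proof: both induct (on $l$ or equivalently on $l-r$), use the identity $K_{l+1}\vee K_{a,b}=K_1\vee(K_l\vee K_{a,b})$, and apply Lemma~\ref{lem: difference} at the inductive step. There is no substantive difference.
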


\begin{proof}
The proof is by induction on $l$. The basis step follows from the fact that $\chi_{\ell}(K_r \vee K_{a,b}) - \chi(K_r \vee K_{a,b}) \leq s$. Assume $l > r$, and the desired statement holds for all integers less than $l$ and greater than or equal to $r$. By the inductive hypothesis, $\chi_{\ell}(K_{l-1} \vee K_{a,b}) - \chi(K_{l-1} \vee K_{a,b}) \leq s$. By Lemma~\ref{lem: difference}, $\chi_{\ell}(K_1 \vee K_{l-1} \vee K_{a,b}) - \chi(K_1 \vee K_{l-1} \vee K_{a,b}) \leq s$. Note that $K_1 \vee K_{l-1} \vee K_{a,b} = K_l \vee K_{a,b}$. Thus, the induction step is complete.
\end{proof}

\begin{lem}\label{lem: alsoobvious}
Suppose $p, a, b \in \N$ and $s$ is a nonnegative integer. If $\chi_{\ell}(K_p \vee K_{a,b}) - \chi(K_p \vee K_{a,b}) > s$, then $\tau_s(a,b) > p$.
\end{lem}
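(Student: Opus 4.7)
The plan is to prove the contrapositive: assume $\tau_s(a,b) \leq p$ and deduce $\chi_{\ell}(K_p \vee K_{a,b}) - \chi(K_p \vee K_{a,b}) \leq s$. Setting $r = \tau_s(a,b)$, the definition of the $s^{th}$ generalized Ohba number guarantees that $\chi_{\ell}(K_r \vee K_{a,b}) - \chi(K_r \vee K_{a,b}) \leq s$, with the convention that when $r=0$ we interpret $K_0 \vee K_{a,b}$ as $K_{a,b}$ itself (so the inequality becomes $\chi_\ell(K_{a,b}) - \chi(K_{a,b}) \leq s$, which is exactly the condition under which $\tau_s(a,b)$ is defined to be $0$).

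Next I would invoke Lemma~\ref{lem: alsoinduction} with this $r$ and with $l = p$. Since $p \geq r$, the lemma immediately yields $\chi_{\ell}(K_p \vee K_{a,b}) - \chi(K_p \vee K_{a,b}) \leq s$, which contradicts the hypothesis of the lemma we are trying to prove. Hence $\tau_s(a,b) > p$.

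The only mild subtlety is the $r=0$ case, since Lemma~\ref{lem: alsoinduction} as written takes $r$ to be a nonnegative integer and its inductive argument implicitly uses $K_0 \vee K_{a,b} = K_{a,b}$; I would either verify directly that the induction goes through from the base case $l=1$ (using Lemma~\ref{lem: difference} with $G = K_{a,b}$) or simply note that the conventions already adopted in the excerpt handle this. Aside from that bookkeeping, the proof is essentially a one-line application of Lemma~\ref{lem: alsoinduction}, and I do not anticipate any real obstacle.
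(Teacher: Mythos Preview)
Your proof is correct and essentially the same as the paper's: both arguments reduce the claim to Lemma~\ref{lem: alsoinduction}. The paper phrases it by taking the contrapositive of Lemma~\ref{lem: alsoinduction} and then reading off the conclusion, while you take the contrapositive of the target lemma and apply Lemma~\ref{lem: alsoinduction} in its original form; these are the same argument up to logical rearrangement, and your remark about the $r=0$ case is handled by the same convention the paper already uses.
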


\begin{proof}
The contrapositive of Lemma~\ref{lem: alsoinduction} states that if there exists an integer $l \geq 0$ such that $\chi_{\ell}(K_l \vee K_{a,b}) - \chi(K_l \vee K_{a,b}) > s$, then for any $r$ satisfying $0 \leq r \leq l$, $\chi_{\ell}(K_r \vee K_{a,b}) - \chi(K_r \vee K_{a,b}) > s$. If $\chi_{\ell}(K_p \vee K_{a,b}) - \chi(K_p \vee K_{a,b}) > s$, there is no $r$ satisfying $0 \leq r \leq p$ such that $\chi_{\ell}(K_r \vee K_{a,b}) - \chi(K_r \vee K_{a,b}) \leq s$. Thus, $\tau_s(a,b) > p$. 
\end{proof}






Finally, we show that when $a \leq s+2$, we can identify when $\tau_s(a,b) = 0$.  Our proof makes use Theorem~\ref{thm: listbipartite} and the following well-known result: for $a,b \in \N$, $\chi_{\ell}(K_{a,b}) \leq a+1$.

\begin{customlem} {\bf \ref{lem: closeaandi}}
Suppose $a,b \in \N$ with $a \leq b$ and $s$ is a nonnegative integer. If $a \leq s+1$, then $\tau_s(a,b) = 0$. Moreover, if $a=s+2$, then $\tau_s(a,b)=0$ if and only if $b < (s+2)^{s+2}$.
\end{customlem}

\begin{proof}
Suppose $a \leq s + 1$. Then $\chi_{\ell}(K_{a,b}) \leq a + 1 \leq s + 2$. Consequently $\chi_{\ell}(K_{a,b}) - \chi(K_{a,b}) \leq s + 2 - 2 = s$, which means $\tau_s(a,b) = 0$.

Now, suppose $a=s+2$. If $b < (s+2)^{s+2}$, Theorem~\ref{thm: listbipartite} implies $\chi_{\ell}(K_{s + 2,b}) \leq s + 2$. Consequently $\chi_{\ell}(K_{s + 2,b}) - \chi(K_{s + 2,b}) \leq s + 2 - 2 = s$, which means $\tau_s(s + 2,b) = 0$.  Conversely, suppose $b \geq (s+2)^{s+2}$. Theorem~\ref{thm: listbipartite} implies $\chi_{\ell}(K_{s + 2,b}) = s + 3$ which means $\chi_{\ell}(K_{s + 2,b}) - \chi(K_{s + 2,b}) > s$. So, by Lemma~\ref{lem: alsoobvious} $\tau_s(s + 2,b) > 0$.
\end{proof}

\section{Bounding from Below} \label{below}

In this section we will introduce new lower bounds on $\tau_s(a,b)$.  We begin by proving Theorem~\ref{thm: its6not8} which we restate.

\begin{customthm} {\bf \ref{thm: its6not8}} 
Suppose $s$ and $l$ are nonnegative integers. Then $\tau_s(s+2,b) \geq l$ when $b \geq \binom{l+s+1}{l}(l+s+1)^{s+1}$.
\end{customthm}

\begin{proof}
The result is obvious when $l=0$.  Note that by Lemma~\ref{lem: closeaandi} the result holds when $l=1$.  So, suppose $l\geq2$ and $b \geq \binom{l+s+1}{l}(l+s+1)^{s+1}$. Let $G = K_{l-1} \vee K_{s+2, b}$. We will show that $\chi_{\ell}(G) > l+s+1$ which by Lemma~\ref{lem: alsoobvious} will imply that $\tau_s(s+2, b)>l-1$ as desired. 

Suppose the vertex set of the copy of $K_{l-1}$ used to form $G$ is $W = \{w_1,\dots,w_{l-1}\}$. Also, suppose the bipartition of the copy of $K_{s+2,b}$ used to form $G$ is $X = \{x_1,\ldots, x_{s+2}\}$, $Y = \{y_1,\ldots, y_b\}$. Consider the $(l+s+1)$-assignment for $G$ defined as follows. Let $L(v) = [l+s+1]$ whenever $v \in W \bigcup \{x_{s+2}\}$. Let $L(x_j) = [j(l+s+1)+1 : (j+1)(l+s+1)]$ for each $j \in [s+1]$. Let $A$ be the set of all $l$-element subsets of $[l+s+1]$, and let 
$$C = \left\{ I \cup \bigcup_{j=1}^{s+1}\{c_j\} : I \in A, c_j \in L(x_j) \text{ for each } j \in [s+1]\right\}.$$

Note that $|C| = \binom{l+s+1}{l}(l+s+1)^{s+1}$, and let $c = |C|$. Name the elements of $C$ so that $C = \{C_1,\ldots, C_c\}$. Let $L(y_i) = C_i$ for each $i \in [c]$. Finally, let $L(y_j) = [l+s+1]$ whenever $c < j \leq b$.

For the sake of contradiction, suppose that $f$ is a proper $L$-coloring of $G$. Since $W \bigcup \{x_{s+2}\}$ is a clique in $G$, we know $f(W \bigcup \{x_{s+2}\}) \in A$. Consequently, $f(W \bigcup X) \in C$ which means there is an $r \in [c]$ such that $f(W \bigcup X) = C_r$. Notice $y_r$ is adjacent to all the vertices in $W \bigcup X$. Since $f(y_r) \in L(y_r) = C_r$, $f$ is not proper which is a contradiction.
\end{proof}

\begin{customcor} {\bf \ref{cor: its6not8}}
Let $s$ be any nonnegative integer. If $b$ satisfies $b \geq s+2$ and $b \geq (s+1)^{2s+2}/(s+1)!$, then $\tau_s(s+2,b) \geq \lfloor ((s+1)!b)^{1/(2s+2)} \rfloor - s-1$. 
\end{customcor}

\begin{proof}
Note that $\lfloor \sqrt[2s+2]{(s+1)!b} \rfloor - s-1  \geq 0$. Let $l = \lfloor \sqrt[2s+2]{(s+1)!b} \rfloor - s-1$. Notice
$$b = \frac{\left(\sqrt[2s+2]{(s+1)!b}\right)^{2s+2}}{(s+1)!} \geq \frac{(l+s+1)^{2s+2}}{(s+1)!} \geq \binom{l+s+1}{l}(l+s+1)^{s+1}.$$
 By Theorem~\ref{thm: its6not8}, $\tau_s(s+2,b) \geq \lfloor \sqrt[2s+2]{(s+1)!b} \rfloor - s-1$.
\end{proof}

We can view Corollary~\ref{cor: its6not8} as a generalization of the lower bound in Theorem~\ref{thm: Allagan} since it gives us the lower bound in Theorem~\ref{thm: Allagan} when $s=0$.  We now establish some lower bounds on $\tau_s(a,b)$ for any $a$ satisfying $a > s+2$.  Interestingly, the formula for our bound that establishes a lower bound of $l$ depends on whether $(a-s-1)$ divides $(l+s+1)$. 

\begin{customthm} {\bf \ref{thm: r=0}}
Suppose $s$, $l$, and $a$ are any nonnegative integers such that $l \geq 2$, $a > s+2,$ and $(a-s-1)$ divides $(l+s+1)$. Let $q=(l+s+1)/(a-s-1)$.
If $b \geq a$ and
$$b \geq  (l+s+1)^s\left(\binom{l+s+1}{l+1} + \binom{l+s+1}{l} q\right),$$
then $\tau_s(a,b) \geq l$.
\end{customthm}

\begin{proof}
Let $G = K_{l-1} \vee K_{a,b}$. We will show that $\chi_{\ell}(G) > l+s+1$, which by Lemma~\ref{lem: alsoobvious} will imply $\tau_s(a,b) > l-1$ as desired.

Suppose the vertex set of the copy of $K_{l-1}$ used to form $G$ is $W = \{w_1,\ldots, w_{l-1}\}$. Also, suppose the bipartition of the copy of $K_{a,b}$ used to form $G$ is $X = \{x_1,\ldots,x_a\}$, $Y=\{y_1, \ldots, y_b\}$. Consider the $(l+s+1)$-assignment for $G$ defined as follows. Let $L(v) = [l+s+1]$ whenever $v \in W \bigcup \{x_a\}$. For each $d \in [a-s: a-1]$, let $L(x_{d}) = [(d-(a-s-1))(l+s+1)+1:(d-(a-s-1)+1)(l+s+1)]$. For every $j \in [a-s-1]$, let $P_j = [(j-1)q + 1: jq]$. Note that $\{P_1, \ldots, P_{a-s-1}\}$ is a partition of $[l+s+1]$. For every $j \in [a-s-1]$ let $L(x_j) = ([l+s+1] - P_j) \bigcup [(s+1)(l+s+1)+1:(s+1)(l+s+1)+ q]$. Let $A_{l+1}$ be the set of all $(l+1)$-element subsets of $[l+s+1]$. Let $A_{l}$ be the set of all $l$-element subsets of $[l+s+1]$. Notice $|A_{l+1}| = \binom{l+s+1}{l+1}$ and $|A_{l}| = \binom{l+s+1}{l}$.  Let $\mathcal{J} = \{\bigcup_{j=1}^{s}\{m_j\} : \text{ for each } j\in[s], m_j\in [j(l+s+1)+1 : (j+1)(l+s+1)]\}$, and take $\mathcal{J}=\emptyset$ when $s=0$. Let

$$T= \{I \cup J \cup \{i\}: I \in A_l, J \in \mathcal{J}, \\ i \in [(s+1)(l+s+1)+1: (s+1)(l+s+1) + q]\},$$
$$Z = \{I \cup J : I \in A_{l+1}, J \in \mathcal{J}\}, \text{ and } C = T \cup Z.$$

Note that $|C| = (l+s+1)^s\left(\binom{l+s+1}{l+1} + \binom{l+s+1}{l} q \right)$. Let $c = |C|$. Name the elements of $C$ so that $C = \{C_1,\ldots, C_c\}$. Let $L(y_i) = C_i$ for each $i \in [c]$. Finally, let $L(y_j) = [l+s+1]$ whenever $j \in [c+1:b]$.

For the sake of contradiction, suppose that $f$ is a proper $L$-coloring of $G$. Since $W \bigcup \{x_a\}$ is a clique in $G$, we know $f(W \bigcup \{x_a\}) \in A_{l}$. Let $D = \bigcup_{d=a-s}^{a-1}L(x_d)$. Let $u=(s+1)(l+s+1) + q$. We claim that $f(\{x_1,\ldots, x_{a-s-1}\})$ contains an element of $[u] - (f(W \bigcup \{x_a\}) \bigcup D)$. To see why, assume $f(\{x_1,\ldots, x_{a-s-1}\}) \subseteq (f(W \bigcup \{x_a\}) \bigcup D)$. Since $D \cap L(x_j) = \emptyset$ for each $j \in [a-s-1]$, $f(x_j) = f(x_a)$ for each $j \in [a-s-1]$. Since $f(x_a) \in [l+s+1]$ there exists a $t \in [a-s-1]$ such that $f(x_a) \in P_t$. This means $f(x_a) \notin L(x_t)$ which contradicts $f(x_t) = f(x_a)$. Since $\{f(x_{a-s}),\ldots, f(x_{a-1})\} \in \mathcal{J}$,  there is a $C_p \in C$ such that $C_p \subseteq f(W \bigcup X)$.
Note that $f(y_p) \in C_p$ and $y_p$ is adjacent to every element in $W\bigcup X$. This contradicts the fact that $f$ is a proper $L$-coloring.

\end{proof}

\begin{customthm} {\bf \ref{thm: improvedgeneral}}
Suppose $s$, $l$, and $a$ are any nonnegative integers such that $l \geq 2$, $a > s+2,$ and $(a-s-1)$ does not divide $(l+s+1)$. Let $q=\lfloor(l+s+1)/(a-s-1)\rfloor$ and
$$u = ((1+q)(a-s-1)-(l+s+1))q.$$
If $b \geq a$ and
$$b \geq (l+s+1)^s\left(\binom{l+s+1}{l+1} + \binom{l+s+1}{l} (q+1) - \binom{u}{l}\right),$$
then $\tau_s(a,b) \geq l$. 
\end{customthm}

\begin{proof}
Let $G = K_{l-1} \vee K_{a,b}$. We will show that $\chi_{\ell}(G) > l+s+1$, which by Lemma~\ref{lem: alsoobvious} will imply $\tau_s(a,b) > l-1$ as desired.

Suppose the vertex set of the copy of $K_{l-1}$ used to form $G$ is $W = \{w_1,\ldots,w_{l-1}\}$. Also, suppose the bipartition of the copy of $K_{a,b}$ used to form $G$ is $X = \{x_1,\ldots,x_a\}$, $Y=\{y_1,\ldots,y_b\}$. Consider the $(l+s+1)$-assignment for $G$ defined as follows. Let $L(v) = [l+s+1]$ whenever $v \in W \bigcup \{x_a\}$. For each $d \in [a-s: a-1]$, let $L(x_{d}) = [(d-(a-s-1))(l+s+1)+1:(d-(a-s-1)+1)(l+s+1)]$. Suppose $r$ is a nonnegative integer such that $ 0 < r \leq a-s-2$ and $l + s+1 = (a-s-1)q +r$. Then for each $i \in [r]$ let $P_i = [(i-1)(q+1) + 1: i(q+1)]$. Then for every $j \in [a-s-1-r]$, let $P_{r+j} = [r(q+1) + (j-1)q + 1: r(q+1) + jq]$. In the case where $q=0$, we take $P_{r+j}$ to be the empty set. Note that $\{P_1,\ldots, P_{a-s-1}\}$ is a partition of $[l+s+1]$. For each $i \in [r]$ let $L(x_i) = ([l+s+1] - P_i) \bigcup [(s+1)(l+s+1)+1:(s+1)(l+s+1) + (q+1)]$. For every $j \in [a-s-1-r]$ let $L(x_{r+j}) = ([l+s+1] - P_{r+j}) \bigcup [(s+1)(l+s+1)+1:(s+1)(l+s+1)+ q]$. Let $A_{l+1}$ be the set of all $(l+1)$-element subsets of $[l+s+1]$. Let $A_{l}$ be the set of all $l$-element subsets of $[l+s+1]$. Notice $|A_{l+1}| = \binom{l+s+1}{l+1}$ and $|A_{l}| = \binom{l+s+1}{l}$. Let $\mathcal{J} = \{\bigcup_{j=1}^{s}\{m_j\} : \text{ for each } j\in[s], m_j\in [j(l+s+1)+1:(j+1)(l+s+1)]\}$, and take $\mathcal{J}=\emptyset$ when $s=0$. Let $h = (s+1)(l+s+1)$,

$$T_1 = \left\{I \cup \{h+(q+1)\}: I \in A_l, I \subseteq \bigcup_{i=r+1}^{a-s-1}P_i \right\}, \;\; \text{ and }$$
$$T_2 =\left\{I \cup \{i\}: I \in A_l, i \in [h+1: h + q+1] \right\}.$$
Notice $u$ is the cardinality of $\bigcup_{i=r+1}^{a-s-1}P_i$. Furthermore, if $u<l$, then $T_1=\emptyset$. Also, $|T_1|=\binom{u}{l}$ and $|T_2 - T_1| = \binom{l+s+1}{l} (q+1) - \binom{u}{l}$. 
Let
$$T= \{I \cup J: I \in (T_2-T_1), J \in \mathcal{J} \}, Z = \{I \cup J : I \in A_{l+1}, J \in \mathcal{J}\}, \text{ and } C = T \cup Z.$$

Note that $|C| = (l+s+1)^s\left(\binom{l+s+1}{l+1} + \binom{l+s+1}{l} (q+1) - \binom{u}{l}\right)$. Let $c = |C|$. Name the elements of $C$ so that $C = \{C_1,\ldots,C_c\}$. Let $L(y_i) = C_i$ for each $i \in [c]$. Finally, let $L(y_j) = [l+s+1]$ whenever $j \in [c+1:b]$.

For the sake of contradiction, suppose that $f$ is a proper $L$-coloring of $G$. Since $W \bigcup \{x_a\}$ is a clique in $G$, we know $f(W \bigcup \{x_a\}) \in A_{l}$. We claim that $f(\{x_1 ,\ldots, x_{a-s-1}\})$ contains an element of $[h+1: h+ q+1]\cup([l+s+1]-f(W \bigcup \{x_a\}))$. To see why, assume $f(\{x_1,\ldots, x_{a-s-1}\}) \subseteq (f(W \bigcup \{x_a\}) \bigcup [l+s+2:h])$. Since $[l+s+2:h] \cap L(x_j) = \emptyset$ for each $j \in [a-s-1]$, $f(x_j) = f(x_a)$ for each $j \in [a-s-1]$. Since $f(x_a) \in [l+s+1]$ there exists a $t \in [a-s-1]$ such that $f(x_a) \in P_t$. This means $f(x_a) \notin L(x_t)$ which contradicts $f(x_t) = f(x_a)$. 

Additionally, if $f(W \bigcup \{x_a\}) \subseteq \bigcup_{i=r+1}^{a-s-1}P_i$, we claim $f(\{x_1,\ldots,x_{a-s-1}\}) \neq \{h+q+1,f(x_a)\}$. For the sake of contradiction, suppose that $f(\{x_1,\ldots,x_{a-s-1}\}) = \{h+q+1,f(x_a)\}$. Since $f(x_a) \in  \bigcup_{i=r+1}^{a-s-1} P_i$, there exists a $z \in [r+1:a-s-1]$ such that $ \{f(x_a)\} \in P_z$. So $L(x_z)\cap\{h+q+1,f(x_a)\}=\emptyset$, which is a contradiction. Thus, $f(W \cup \{x_1,\ldots,x_{a-s-1}\} \cup \{x_a\}) \notin T_1$. Consequently, there exists an $S \in T_2 - T_1$ or an $S \in A_{l+1}$ such that $S \subseteq f(W \cup \{x_1,\ldots,x_{a-s-1}\} \cup \{x_a\})$.

Finally, when $s\in \N$, $f(\{x_{a-s},\ldots,x_{a-1}\}) \in \mathcal{J}$. So, there is a $C_p \in C$ such that $C_p \subseteq f(W \bigcup X)$.
Note that $f(y_p) \in C_p$ and $y_p$ is adjacent to every element in $W\bigcup X$. This contradicts the fact that $f$ is a proper $L$-coloring.
\end{proof}

It is easy to verify that Corollary~\ref{cor: its6not8} along with Theorems~\ref{thm: r=0} and~\ref{thm: improvedgeneral} imply that for fixed $a \geq s+2$, $\tau_s(a,b) = \Omega(b^{1/(2s+2)})$ as $b \rightarrow \infty$.  So, answering Question~\ref{ques: asym} in the affirmative requires establishing an upper bound on $\tau_s(a,b)$ when $b$ is large and $a \geq s+2$. 





\section{Bounding from Above} \label{above}
 
In this section we are primarily working with graphs of the form $G = K_l \vee K_{a,b}$ with $2 \leq a \leq b$.
When examining $G$ we are focusing on when the graph is $(l+a)$-choosable since $(l+a)$-choosability implies $\tau_{a-2}(a,b) \leq l$.
We will begin with a lemma that makes an important observation about how list overlap impacts the existence of a proper coloring.

\begin{lem}\label{lem: non-empty_General}
Suppose $l,b,s\in \N$, $s \geq 2$, and $G = K_l\vee K_{s,b}$.
Suppose the partite set of size $s$ from the copy of $K_{s,b}$ used to form $G$ is $\{x_1, \ldots, x_{s}\}$. Suppose $L$ is a $(l+s)$-assignment such that there exists $i,j \in [s]$ with $i \neq j$ and $L(x_i) \cap L(x_j) \neq \emptyset$.
Then there is a proper $L$-coloring of $G$. 
\end{lem}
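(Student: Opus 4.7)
The plan is to exploit the common color $c\in L(x_i)\cap L(x_j)$: coloring both $x_i$ and $x_j$ with $c$ shrinks the number of distinct colors used on $X$ by one, creating just enough slack to color the vertices on the $b$-side afterward. Write $W=\{w_1,\ldots,w_l\}$ for the vertex set of the copy of $K_l$ in $G$, and $X=\{x_1,\ldots,x_s\}$ and $Y=\{y_1,\ldots,y_b\}$ for the partite sets of the copy of $K_{s,b}$ used to form $G$. Since $x_i$ and $x_j$ are nonadjacent in $G$, I begin the construction of a proper $L$-coloring $f$ by setting $f(x_i)=f(x_j)=c$.

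I then complete $f$ in three further stages. First, I color the clique $W$ greedily from the reduced lists $L'(w):=L(w)\setminus\{c\}$; each satisfies $|L'(w)|\geq l+s-1\geq l$ (using $s\geq 2$), which suffices to properly color the $l$-clique $W$ while guaranteeing $c\notin f(W)$ and $|f(W)|=l$. Second, for each $x_k\in X\setminus\{x_i,x_j\}$, the vertex $x_k$ is adjacent to no other vertex of $X$, and $|L(x_k)\setminus f(W)|\geq(l+s)-l=s\geq 2$, so $f(x_k)$ can be chosen in $L(x_k)\setminus f(W)$; since $X\setminus\{x_i,x_j\}$ is independent, these choices are made separately. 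Third, for each $y_k\in Y$, the set of forbidden colors is $f(W)\cup f(X)=f(W)\cup f(X\setminus\{x_i,x_j\})\cup\{c\}$, which has cardinality at most $l+(s-2)+1=l+s-1<|L(y_k)|$, so at least one color in $L(y_k)$ remains available for $y_k$.

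The heart of the argument is a one-line count: collapsing the colors of $x_i$ and $x_j$ to the single color $c$ reduces the footprint of $f(X)$ from a worst-case $s$ distinct colors to at most $s-1$, which is precisely the extra slot required to fit inside each $L(y_k)$. I do not anticipate any obstacle beyond verifying these elementary list-size inequalities; no appeal to earlier lemmas in the paper is needed.
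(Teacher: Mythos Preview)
Your proof is correct and essentially identical to the paper's: both color $x_i$ and $x_j$ with a common color $c$, greedily color $W$ (avoiding $c$) and the remaining vertices of $X$, and then observe that each $L(y_k)$ retains at least one available color since at most $l+s-1$ colors have been used on $W\cup X$. The only cosmetic difference is that you split the greedy coloring of $W\cup(X\setminus\{x_i,x_j\})$ into two stages, whereas the paper handles it in a single greedy pass over that subgraph.
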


\begin{proof}
Let $\{y_1, \ldots, y_b\}$ be the partite set of size $b$ used to form the copy of $G = K_l\vee K_{s,b}$.
Suppose the vertex set of the copy of $K_{l}$ used to form $G$ is $W = \{w_1,\ldots,w_{l}\}$. Let $X = \{x_1, \ldots, x_s\}$ and $Y =\{y_1,\ldots, y_b\}$. We know there is a $c \in L(x_i) \cap L(x_j)$. In order to construct a proper $L$-coloring, begin by coloring $x_i$ and $x_j$ with $c$.
Let $L^\prime(w_m) = L(w_m)- \{c\}$ for each $m\in[l]$, and $L'(x_q) = L(x_q)$ for each $q \in[s]$.
Note $|L^\prime(v)| \geq l+s-1$ for each $v\in(W \cup X)$. 
Then we can greedily construct a proper $L^\prime$-coloring, $g$, of $G[W \cup (X-\{x_i,x_j\})]$. 
Suppose the range of $g$ is $R$. 
Color the vertices of $W \cup (X-\{x_i,x_j\})$ according to $g$.
Let $L^{\prime\prime}(y_p) = L(y_p) - (\{c\} \cup R)$ for each $p\in[b]$. 
Note $|L^{\prime\prime}(y_p)| \geq 1$ for each $p \in [b]$, as $|\{c\} \cup R| \leq l+s-1$.
Finally color $y_p$ with any color from $L^{\prime\prime}(y_p)$ for each $p\in[b].$
This completes a proper $L$-coloring of $G$.
\end{proof}

If $G$ is a graph and $L$ is a list assignment for $G$, let $$\mathcal{R}(G,L) = \{f(V(G)): \text{ $f$ is a proper $L$-coloring of $G$}\}.$$ 
Notice by Lemma~\ref{lem: non-empty_General} that if $G = K_l\vee K_{s,b}$, the partite set of size $s$ of the copy of $K_{s,b}$ used to form $G$ is $\{x_1, \ldots, x_{s}\}$, and $L$ is an $(l+s)$-assignment, then it is only possible that $G$ is not $L$-colorable if $L(x_1), \ldots, L(x_s)$ are pairwise disjoint.  We will see that when $L(x_1), \ldots, L(x_s)$ are pairwise disjoint and $W$ is the vertex set of the copy of $K_l$ used to form $G$, if $|\mathcal{R}(G[W \cup X],L')| > b$ where $L'$ is $L$ with domain restricted to $W \cup X$, then there exists a proper $L$-coloring of $G$.  In the next two lemmas we work towards an upper bound on $|\mathcal{R}(G[W \cup X],L')|$.

\begin{lem}\label{lem: weighted}
Suppose $k \geq 2$ and $d_1, \ldots, d_k$ are positive integers with $d_1 \leq d_i$ for each $i \in [k]$.
Let $l$ be a nonnegative integer such that $l \leq d_1 -1$, and let $A_l = \{(a_1, \ldots, a_k) : \text{$a_i$ is a nonnegative integer for each $i \in [k]$ and $\sum_{i=1}^{k}a_i = l$}\}$.
Then, $$(d_1-l) \prod_{i=2}^{k}d_i \leq \prod_{i=1}^{k}(d_i-a_i)$$ for each $(a_1, \ldots, a_k)\in A_l$.
\end{lem}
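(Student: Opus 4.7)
The plan is to reduce any tuple in $A_l$ to the extreme tuple $(l, 0, \ldots, 0)$ via a sequence of exchanges, each of which weakly decreases the product $\prod_{i=1}^{k}(d_i - a_i)$. Starting from $(a_1, \ldots, a_k) \in A_l$, for each $j = 2, 3, \ldots, k$ in turn I would move the entire $a_j$ from index $j$ onto index $1$, replacing $(a_1, a_j)$ by $(a_1 + a_j, 0)$. After all $k-1$ such swaps the tuple has become $(l, 0, \ldots, 0)$ and the product equals $(d_1 - l)\prod_{i=2}^{k} d_i$, which gives the desired lower bound.

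The heart of the argument is a single exchange. Since all other factors are unchanged, it suffices to show $(d_1 - a_1)(d_j - a_j) \geq (d_1 - a_1 - a_j)\, d_j$ whenever $j \neq 1$. Direct expansion yields the identity
\[(d_1 - a_1)(d_j - a_j) - (d_1 - a_1 - a_j)\, d_j = a_j\bigl(d_j - d_1 + a_1\bigr),\]
which is a product of three nonnegative quantities: $a_j \geq 0$ by assumption, $d_j - d_1 \geq 0$ because $d_1$ is the smallest of the $d_i$, and $a_1 \geq 0$. I also need the first coordinate to stay strictly below $d_1$ throughout the iteration so that each factor $(d_1 - a_1)$ remains positive; this holds because the first coordinate is bounded by $l \leq d_1 - 1$ at every stage.

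The only real pitfall is the choice of exchange. Moving a single unit at a time from index $j$ to index $1$ does not always weakly decrease the product; the sign of the corresponding difference depends on whether $(d_1 - a_1 - 1) \leq (d_j - a_j)$, which can fail when $a_1$ is small and $a_j$ is large. Moving the entire pile $a_j$ in one step avoids this subtlety, since the resulting identity factors cleanly into nonnegative pieces for every configuration, independent of the current values of $a_1$ and $a_j$.
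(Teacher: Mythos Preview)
Your proof is correct and uses the same key idea as the paper: transferring the entire block $a_j$ from coordinate $j$ to coordinate $1$ can only decrease the product, by the identity $(d_1-a_1)(d_j-a_j)-(d_1-a_1-a_j)d_j=a_j(d_j-d_1+a_1)\ge 0$. The paper packages this as an extremal argument (choose a minimizer with $\sum_{i\ge 2}b_i$ minimal and derive a contradiction), whereas you carry out the transfers directly and iteratively; the algebraic content is identical.
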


\begin{proof}
Suppose $f_l : A_l \rightarrow \N$ is given by $f_l(a_1, \ldots, a_k) = \prod_{i=1}^{k}(d_i-a_i)$.
From among all the elements of $A_l$ that make $f_l$ as small as possible, choose one, say $(b_1,\ldots,b_k)$, so that the sum of the second through the $k^{th}$ coordinate is as small as possible. If $\sum_{i=2}^{k}b_i = 0$ then we are done. So suppose $\sum_{i=2}^{k}b_i > 0$.

Suppose $j \in [2:k]$ has the property that $b_j > 0$.
Let $c_1 = b_1+b_j$, $c_j = 0$, and $c_i = b_i$ for each $i \in([k]-\{1,j\})$
Note $(c_1,\ldots, c_k) \in A_l$, and by the choice of $(b_1, \ldots, b_k)$ we know that $f_l(c_1, \ldots, c_k) > f_l(b_1, \ldots, b_k)$.
This means $(d_1-c_1)(d_j-c_j) > (d_1-b_1)(d_j-b_j)$ which implies $(d_1-b_1-b_j)(d_j) > (d_1-b_1)(d_j-b_j)$ which implies $d_1-d_j > b_1$. Thus, $0 \geq d_1-d_j > b_1 \geq 0$ which is a contradiction. So $\sum_{i=2}^{k}b_i = 0$.
\end{proof}

We are now ready to prove a lemma that gives a lower bound on $|\mathcal{R}(G[W \cup X],L')|$. The lemma generalizes a theorem of Gravier, Maffray, and Mohar \cite{GM03}~\footnote{When $s=2$ Lemma~\ref{lem: upperinduction_general} reduces to Theorem~1 in~\cite{GM03}.}.

\begin{lem}\label{lem: upperinduction_general}
Suppose $G$ is an $n$-vertex graph with $n \geq 2$. 
Suppose $\{x_i: i \in [s]\}$ is a set of vertices in $G$ where $2 \leq s \leq n$.
Suppose $L$ is a list assignment for $G$ with $|L(v)| \geq n$ for each $v \in V(G)$. Also, suppose $L(x_1), \ldots, L(x_s)$ are pairwise disjoint.
Then $$|\{f(V(G)): \text{ $f$ is a proper $L$-coloring of $G$}\}| \geq \prod_{i=1}^s|L(x_i)|.$$
\end{lem}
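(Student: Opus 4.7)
The plan is to induct on $n$, the number of vertices of $G$. For the base case $n = s$, the graph has only the vertices $x_1, \ldots, x_s$, and the pairwise disjointness of $L(x_1), \ldots, L(x_s)$ means every tuple $(c_1, \ldots, c_s) \in L(x_1) \times \cdots \times L(x_s)$ automatically gives a proper $L$-coloring with image $\{c_1, \ldots, c_s\}$; moreover, different tuples give different images because $c_i$ is recovered as the unique element of the image in $L(x_i)$. Hence the count of distinct images is exactly $\prod_{i=1}^s |L(x_i)|$.

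For the inductive step with $n > s$, I would choose some $v \in V(G) \setminus \{x_1, \ldots, x_s\}$ and let $G' = G - v$. Applying the inductive hypothesis to $G'$ with the same $L$ (which satisfies $|L(u)| \geq n > n - 1 = |V(G')|$, and the disjointness of the $L(x_i)$'s is preserved) yields $|\mathcal{R}(G', L)| \geq \prod_{i=1}^s |L(x_i)|$. It then suffices to construct an injection $\psi : \mathcal{R}(G', L) \hookrightarrow \mathcal{R}(G, L)$. For each $I' \in \mathcal{R}(G', L)$, I would fix a proper $L$-coloring $f'_{I'}$ of $G'$ realizing $I'$ and set $N_{I'} = \{f'_{I'}(u) : u \sim v\}$; since $|L(v)| \geq n > |N(v)|$, the set $L(v) \setminus N_{I'}$ is nonempty. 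If $L(v) \cap I' \not\subseteq N_{I'}$ one extends $f'_{I'}$ with some $c \in (L(v) \cap I') \setminus N_{I'}$ and puts $\psi(I') = I'$; otherwise one picks $c \in L(v) \setminus N_{I'}$ (forced to lie outside $I'$ in this case) and puts $\psi(I') = I' \cup \{c\}$.

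The main obstacle is verifying that $\psi$ is injective. Potential collisions occur when two distinct $I'_1, I'_2$ both enlarge to the same image, or when an unchanged image coincides with an enlarged one. The disjointness of $L(x_1), \ldots, L(x_s)$ is critical here: for any image $I \in \mathcal{R}(G, L)$, the sizes $|I \cap L(x_i)|$ are constrained, and Lemma~\ref{lem: weighted} provides a lower bound on $\prod_{i=1}^s |I \cap L(x_i)|$ that, together with a canonical choice of $f'_{I'}$ (for instance, lexicographically smallest under a fixed global ordering of colorings), rules out these collisions by forcing the ``extra'' color $c$ selected in the enlarging case to be uniquely determined by $I'$. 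Once injectivity is established we have $|\mathcal{R}(G, L)| \geq |\mathcal{R}(G', L)| \geq \prod_{i=1}^s |L(x_i)|$, completing the induction.
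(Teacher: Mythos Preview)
Your inductive framework and base case are fine, but the heart of your inductive step --- the injection $\psi:\mathcal{R}(G',L)\to\mathcal{R}(G,L)$ --- does not work as described. Here is a concrete failure. Take $G$ to be the path $x_1\!-\!v\!-\!x_2$ (so $n=3$, $s=2$) with $L(x_1)=\{1,2,3\}$, $L(x_2)=\{4,5,6\}$, $L(v)=\{1,2,4\}$. For $I'=\{1,4\}$ the unique realizing coloring has $N_{I'}=\{1,4\}$, so $L(v)\cap I'=\{1,4\}\subseteq N_{I'}$ and you are in the enlarging case; the only available $c$ is $2$, giving $\psi(\{1,4\})=\{1,2,4\}$. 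For $I'=\{2,4\}$ the same analysis forces $c=1$ and $\psi(\{2,4\})=\{1,2,4\}$. Two distinct $I'$ collide. No canonical ordering of colorings can prevent this, because in both cases the realizing coloring of $G'$ is unique; and Lemma~\ref{lem: weighted} is a purely arithmetic inequality about products $\prod(d_i-a_i)$ --- it says nothing about recovering which element of an image was the ``extra'' color.

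The paper avoids this difficulty by not attempting an injection at all. It fixes a vertex $z\notin\{x_1,\dots,x_s\}$ and a single color $c\in L(z)$ (say $c\notin L(x_i)$ for $i\geq 2$), and splits $\mathcal{R}(G,L)$ into two \emph{disjoint} pieces: images arising from colorings with $f(z)=c$, and images containing no occurrence of $c$ whatsoever. The first piece is bounded below by the inductive hypothesis applied to $G-z$ with all lists shrunk by deleting $c$, yielding $(|L(x_1)|-1)\prod_{i\geq 2}|L(x_i)|$. The second piece is bounded below by first greedily coloring $V(G)\setminus\{x_1,\dots,x_s\}$ from the shrunk lists and then counting extensions to the $x_i$; Lemma~\ref{lem: weighted} enters exactly here, to show that however the at most $n-s$ greedy colors are distributed among the $L(x_i)$, the product of the residual list sizes is still at least $\prod_{i\geq 2}|L(x_i)|$. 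Summing the two bounds gives $\prod_i|L(x_i)|$. The key structural idea you are missing is this ``remove one color and split'' decomposition; once you see it, Lemma~\ref{lem: weighted} has a natural role that is quite different from the one you tried to assign it.
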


\begin{proof}
Fix an $s$ with $s \geq 2$.
The proof is by induction on $n$. The result is obvious when $n = s$. So, suppose $n \geq s+1$ and the desired result holds for all positive integers less than $n$ and at least $s$.  For each $i \in [s]$ let $n_i = |L(x_i)|$. Suppose, $z\in V(G)-\{x_i: i \in [s] \}$ and $1 \in L(z)$.
Since $L(x_1), \ldots, L(x_s)$ are pairwise disjoint, we may suppose $1 \notin L(x_i)$ whenever $i \in [2:s]$.
Let $$A = \{f(V(G)): \text{$f$ is a proper $L$-coloring of $G$ with $f(z) = 1$}\}, $$ $$B = \{f(V(G)):\text{$f$ is a proper $L$-coloring of $G$ with }  1 \notin f(V(G))\},$$ 
and $C = \mathcal{R}(G,L)$.  Clearly, $A \cup B \subseteq C$ and $A \cap B = \emptyset$. Thus, $|C| \geq |A|+|B|$. 
Next, we will bound $|A|$ and $|B|$ from below. Let $G^\prime = G -{z}$. Note $G^\prime$ has $n-1$ vertices and $\{x_1, \ldots, x_s\}$ is an independent set in $G^\prime$. Let $L^\prime(v) = L(v) - \{1\}$ for each $v \in V(G')$. Notice that $|L^\prime(v)| \geq n-1$ for each $v \in V(G^\prime)$. Clearly $L'(x_1), \ldots, L'(x_s)$ are pairwise disjoint. Let $D = \{f(V(G')): \text{$f$ is a proper $L'$-coloring of $G'$}\}$. By the induction hypothesis, $|D| \geq (n_1 - 1)\prod_{i=2}^{s}n_i$. Consider the function $T: D \rightarrow A$ given by $T(R) = R \cup \{1\}$. It is easy to verify that $T$ is a bijection; therefore, $|D| = |A|$.

Now we will bound $|B|$ from below. Let $L''(v) = L(v) - \{1\}$ for each $v \in V(G)$. Now greedily color each $v \in (V(G)- \{x_1,\ldots,x_s\})$ with an element from $L''(v)$ (such a coloring exists since we are coloring $n-s$ vertices and $|L''(v)| \geq n-1$ for each $v \in V(G)$). Suppose $M$ consists of the colors used for this greedy coloring.
Let $L'''(x_i) = L''(x_i) - M$. Consider the function $T': \prod_{i=1}^s L'''(x_i) \rightarrow B$ given by $T'(b_1,\ldots,b_s) = M \cup \{b_1,\ldots, b_s\}$.
It is easy to verify that $T'$ is a injection; therefore, $|\prod_{i=1}^s L'''(x_i)| \leq |B|$.
Since $L'''(x_1),\ldots, L'''(x_s)$ are pairwise disjoint, $|\prod_{i=1}^s L'''(x_i)| = \prod_{i=1}^s |L'''(x_i)|$. 
Also since $|M| \leq n-s$, there exists nonnegative integers $a_1, \ldots, a_s$ satifying $\sum_{i=1}^{s}a_i \leq n-s$ such that $|L'''(x_1)| \geq n_1-1-a_1$ and $|L'''(x_i)| \geq n_i-a_i$ whenever $i \in [2:s]$.
Thus $\prod_{i=1}^s |L'''(x_i)| \geq (n_1-1-a_1)\prod_{i=2}^{s}(n_i-a_i)$.
We will show that $(n_1-1-a_1)\prod_{i=2}^{s}(n_i-a_i)\geq \prod_{i=2}^{s}n_i$ by considering two cases: (1) $n_1 \leq n_i$ for all $i \in [s]$ and (2) $n_i < n_1$ for some $i \in [2:s]$.

For (1) note that Lemma~\ref{lem: weighted}, $\sum_{i=1}^{s}a_i \leq n-s$, and $n_1 \geq n$ imply that 
\begin{align*}
(n_1-1-a_1)\prod_{i=2}^{s}(n_i-a_i) \geq \left(n_1-1-\sum_{i=1}^{s}a_i\right)\prod_{i=2}^{s}n_i &\geq (n_1-1-n+s)\prod_{i=2}^{s}n_i \\
&\geq (s-1)\prod_{i=2}^{s}n_i \\
&\geq \prod_{i=2}^{s}n_i.
\end{align*}  

For (2) without loss of generality assume $n_2 \leq n_i$ for each $i \in [s]$.
Lemma~\ref{lem: weighted}, $\sum_{i=1}^{s}a_i \leq n-s$, $n_1 > n_2$, and $n_2 \geq n$ imply that~\footnote{We take $\prod_{i=3}^{s}n_i$ to be 1 when $s = 2$.} 

\begin{align*}
(n_1-1-a_1)\prod_{i=2}^{s}(n_i-a_i) &\geq (n_1-1)\left(n_2-\sum_{i=1}^{s}a_i\right)\prod_{i=3}^{s}n_i \\
&\geq (n_1-1)(n_2-n+s)\prod_{i=3}^{s}n_i \\
&\geq n_2(n_2-n+s)\prod_{i=3}^{s}n_i \geq \prod_{i=2}^{s}n_i.
\end{align*}

Consequently, $|B| \geq \prod_{i=2}^{s}n_i$. Thus, $|C| \geq |A|+|B| \geq (n_1 - 1)\prod_{i=2}^{s}n_i + \prod_{i=2}^{s}n_i = \prod_{i=1}^{s}n_i$ which completes the induction step.
\end{proof}

Having established our lemmas, we are ready to prove Theorem~\ref{thm: half} which we restate.

\begin{customthm} {\bf \ref{thm: half}}
Suppose $s$ is a positive integer with $s \geq 2$ and $l$ is a nonnegative integer. Then $\tau_{s-2}(s,b) \leq l$ whenever $b \leq (l+s)^s -1$.
\end{customthm}

\begin{proof}
Notice that when $l=0$ the result follows from Lemma~\ref{lem: closeaandi}. 
So, suppose $l$ is positive.
Let $G = K_l \vee K_{s,b}$.
To show that $\tau_{s-2}(s,b) \leq l$, we will show that $\chi_{\ell}(G) -  \chi(G) \leq s-2$ which is equivalent to $\chi_{\ell}(G) \leq l+s$.

Suppose the vertex set of the copy of $K_l$ used to construct $G$ is $W = \{w_1, \ldots ,w_l\}$. 
Furthermore, the copy of $K_{s,b}$ used to construct $G$ has bipartition $X = \{x_1,\ldots,x_s\}, Y = \{y_1, \ldots , y_b\}$.  
Suppose $L$ is an arbitrary $(l+s)$-assignment for $G$. 
We must show that there exists a proper $L$-coloring of $G$.
Note that if there exists $i,j \in [s]$ with $i \neq j$ and $L(x_i) \cap L(x_j) \neq \emptyset$ the existence of a proper $L$-coloring of $G$ follows from Lemma~\ref{lem: non-empty_General}. So we may assume that $L(x_1), \ldots, L(x_s)$ are pairwise disjoint.

Let $L'$ be the function $L$ with domain restricted to $W \cup X$.
By Lemma~\ref{lem: upperinduction_general} we know that 
$$|\{f(W \cup X):\text{$f$ is a proper $L'$-coloring of $G[W \cup X]$}\}| \geq (l+s)^s.$$
Since $b \leq (l+s)^s-1$, the above inequality implies that there is a proper $L'$-coloring $g$ of $G[W \cup X]$ such that $L(y_i) \neq g(W \cup X)$ for each $i \in [b]$. Color the vertices in $W \cup X$ according to $g$. 
Let $L''(y_i) = L(y_i)-g(W \cup X)$ for each $i \in [b]$. Clearly $|L''(y_i)| \geq 1$ for each $i \in [b]$.
Finally, color $y_i$ with any color from $L^{\prime\prime}(y_i)$ for each $i\in[b]$.
This completes a proper $L$-coloring of $G$.
\end{proof}

We can now use Corollary~\ref{cor: its6not8} and Theorem~\ref{thm: half} to prove the following.

\begin{cor}\label{thm: decent_sandwich}
Suppose $s$ is an integer such that $s \geq 2$ and b is an integer such that $b \geq \max\{s,(s-1)^{2s-2}/(s-1)!\}$.
Then, $$\left\lfloor ({(s-1)!b})^{1/{(2s-2)}} \right \rfloor - s+1 \leq \tau_{s-2}(s,b) \leq \left\lceil(b+1)^{1/s}\right\rceil-s.$$
\end{cor}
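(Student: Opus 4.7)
The plan is to obtain Corollary~\ref{thm: decent_sandwich} as an immediate consequence of Corollary~\ref{cor: its6not8} (for the lower bound) and the restated Theorem~\ref{thm: half} from Section~\ref{above} (for the upper bound), with bookkeeping to match the index conventions. Since $s \geq 2$ means that $s-2$ is a nonnegative integer, the entire proof reduces to two substitutions and one inequality check; there is no new combinatorial content to develop.

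For the lower bound, I would apply Corollary~\ref{cor: its6not8} with its parameter ``$s$'' replaced by $s-2$. That corollary requires $b \geq (s-2)+2 = s$ and $b \geq (s-1)^{2s-2}/(s-1)!$, both of which are exactly the hypotheses in our ``$\max$'' condition on $b$. Its conclusion then reads
\[
\tau_{s-2}(s,b) \;\geq\; \bigl\lfloor ((s-1)!b)^{1/(2s-2)} \bigr\rfloor - (s-2) - 1 \;=\; \bigl\lfloor ((s-1)!b)^{1/(2s-2)} \bigr\rfloor - s + 1,
\]
which is precisely the left-hand inequality we want.

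For the upper bound, I would set $l = \lceil (b+1)^{1/s}\rceil - s$ and apply Theorem~\ref{thm: half} (in its Section~\ref{above} form, which states $\tau_{s-2}(s,b) \leq l$ whenever $b \leq (l+s)^s - 1$). By choice of $l$, we have $l+s = \lceil (b+1)^{1/s}\rceil \geq (b+1)^{1/s}$, so raising to the $s$-th power gives $(l+s)^s \geq b+1$, i.e.\ $b \leq (l+s)^s - 1$, which is the hypothesis of Theorem~\ref{thm: half}. One also needs $l \geq 0$ so that Theorem~\ref{thm: half} is being applied in a meaningful regime; this is a short check using $b \geq (s-1)^{2s-2}/(s-1)!$, which implies $(b+1)^{1/s} \geq s$ for $s \geq 2$ (the case $s=2$ follows from $b\geq s=2$ directly, and for $s \geq 3$ one notes $(s-1)^{2s-2}/(s-1)! \geq (s-1)^s$, giving $b+1 > (s-1)^s$, hence $\lceil (b+1)^{1/s}\rceil \geq s$).

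The only ``obstacle'' is that this short verification requires the inequality $(s-1)^{2s-2}/(s-1)! \geq (s-1)^s$ for $s \geq 3$, which is equivalent to $(s-1)^{s-2} \geq (s-1)!$; this I would verify by an easy induction on $s$ (the base $s=3$ gives equality $2 = 2!$, and the induction step multiplies the left side by $((s)/(s-1))^{s-2} \cdot s$ and the right side by $s$, with $((s)/(s-1))^{s-2} \geq 1$). Once this is in hand, combining the two bounds yields the desired sandwich inequality.
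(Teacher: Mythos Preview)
Your proposal is correct and follows exactly the paper's approach: the lower bound is Corollary~\ref{cor: its6not8} with the parameter substitution $s \mapsto s-2$, and the upper bound is Theorem~\ref{thm: half} (in its Section~\ref{above} form) applied with $l = \lceil (b+1)^{1/s}\rceil - s$ after checking $b \leq (l+s)^s - 1$. Your verification that $l \geq 0$ via $\max\{s,(s-1)^{2s-2}/(s-1)!\} \geq (s-1)^s$ is spelled out in more detail than the paper's (which simply asserts this inequality), but the argument is the same; note only that your intermediate claim ``$(b+1)^{1/s} \geq s$'' is slightly too strong as written---what you actually need and prove is $b+1 > (s-1)^s$, hence $\lceil (b+1)^{1/s}\rceil \geq s$.
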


\begin{proof}
The lower bound follows immediately from Corollary~\ref{cor: its6not8}.
Note that $\left\lceil(b+1)^{1/s}\right\rceil-s \geq 0$ since $b \geq \max\{s,(s-1)^{2s-2}/(s-1)!\} \geq (s-1)^{s}$. Let $l = \lceil(b+1)^{1/s}\rceil-s$. Notice that $b \leq (l+s)^s-1$.
By Theorem~\ref{thm: half} we know that $\tau_{s-2}(s,b) \leq l$.
\end{proof}

We can use Corollary~\ref{thm: decent_sandwich} to improve Theorem~\ref{thm: Allagan} in the strongest possible sense.

\begin{customcor} {\bf \ref{cor: improveAllagan}}
For any $b \geq 2$, $\tau_0(2,b) = \lfloor\sqrt{b}\rfloor - 1.$
\end{customcor}

\begin{proof}
By Corollary~\ref{thm: decent_sandwich} we have that $\lfloor \sqrt{b} \rfloor - 1 \leq \tau_{0}(2,b) \leq \left \lceil \sqrt{b+1} \;\right\rceil-2.$ The result follows from $\left\lceil \sqrt{b+1}\; \right\rceil = \lfloor \sqrt{b} \rfloor +1$.
\end{proof}

A less important, but somewhat amusing application of Theorems~\ref{thm: its6not8} and~\ref{thm: half} is that $\tau_1(3,b) = 2$ whenever $96 \leq b \leq 124$ and $\tau_2(4,b) = 2$ whenever $1250 \leq b \leq 1295$.  In the following Lemma we prove a result that we will use in order to specifically study when $\tau_{s-2}(s,b) = 1$.

\begin{lem}\label{lem: baseCases:(}
    Suppose $s \geq 2$, and $G=K_{1,s}$. Also suppose that $\{z\}$, $\{x_1, \ldots, x_s\}$ is the bipartition of $G$. Suppose $L$ is an $(s+1)$-assignment for $G$ such that $L(x_1), \ldots , L(x_s)$ are pairwise disjoint. Then $$|\left\{f(V(G)):\text{$f$ is a proper $L$-coloring of $G$}\right\}| \geq \binom{s+1}{2}(s+1)^{s-1}.$$
\end{lem}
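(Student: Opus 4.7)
The plan is to count $M$, the total number of proper $L$-colorings of $G$, and to show each color set in $\mathcal{R}(G,L)$ is produced by at most two such colorings, so that $|\mathcal{R}(G,L)| \geq M/2$. Applying Lemma~\ref{lem: upperinduction_general} directly to $G = K_{1,s}$ would yield only the weaker bound $(s+1)^s$, so the argument must exploit the star structure of $G$ more carefully than that general lemma.

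For the lower bound on $M$, I would count tuples $(c,c_1,\ldots,c_s) \in L(z) \times L(x_1) \times \cdots \times L(x_s)$ satisfying $c \neq c_i$ for each $i \in [s]$. Of the total $(s+1)^{s+1}$ tuples, the ``bad'' ones satisfy $c = c_i$ for some $i$. Because $L(x_1),\ldots,L(x_s)$ are pairwise disjoint, the conditions ``$c = c_i$'' for distinct values of $i$ are mutually exclusive (no single $c$ can lie in two of the $L(x_i)$ simultaneously). Hence the number of bad tuples equals $\sum_{i=1}^s |L(z)\cap L(x_i)|(s+1)^{s-1}$, which is at most $|L(z)|(s+1)^{s-1} = (s+1)^s$. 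Therefore $M \geq (s+1)^{s+1} - (s+1)^s = s(s+1)^s$.

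The main obstacle, and the heart of the argument, is showing each color set is produced by at most two proper $L$-colorings. Fix a proper $L$-coloring $(c,c_1,\ldots,c_s)$ with color set $S$; disjointness of the $L(x_i)$'s together with $c \neq c_i$ forces $|S| = s+1$. For any proper $L$-coloring $(c',c'_1,\ldots,c'_s)$ with the same color set $S$, each $c'_i$ must be the unique element of $(S \setminus \{c'\}) \cap L(x_i)$ (uniqueness again from disjointness). So $S \setminus \{c'\}$ must meet every $L(x_i)$ in exactly one element, leaving at most one ``leftover'' element of $S$ outside $\bigcup_{i=1}^s L(x_i)$. Either such a leftover exists, in which case it is forced to equal $c'$ (at most one choice), or $S \subseteq \bigcup_{i=1}^s L(x_i)$ and exactly one index $j$ satisfies $|S \cap L(x_j)| = 2$, forcing $c' \in L(x_j) \cap S$ (at most two choices). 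In every case, at most two colorings produce $S$, and combining with the bound on $M$ yields $|\mathcal{R}(G,L)| \geq s(s+1)^s/2 = \binom{s+1}{2}(s+1)^{s-1}$.
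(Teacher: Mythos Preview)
Your proof is correct and follows essentially the same strategy as the paper's: both arguments lower-bound the number of proper $L$-colorings by $s(s+1)^s$ and show that each color set arises from at most two such colorings, yielding $|\mathcal{R}(G,L)| \geq s(s+1)^s/2 = \binom{s+1}{2}(s+1)^{s-1}$. The only cosmetic difference is in the ``at most two'' step: the paper argues by contradiction (assuming three colorings $g,h,p$ share a range and tracking where the colors $g(z),h(z),p(z)$ must land), whereas your structural case split on whether $S$ contains an element outside $\bigcup_i L(x_i)$ reaches the same conclusion a bit more directly.
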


\begin{proof}
Let $\mathcal{L}$ be the set of proper $L$-colorings of $G$, and let $\mathcal{R} = \mathcal{R}(G,L)$ (i.e., $\mathcal{R} = \left\{f(V(G)):\text{$f$ is a proper $L$-coloring of $G$}\right\}$).  Note that each element of $\mathcal{R}$ has $s+1$ elements.  Let $F: \mathcal{L} \rightarrow \mathcal{R}$ be the function given by $F(f) = f(V(G))$ (i.e., $F$ maps each proper $L$-coloring of $G$ to its range).  We claim that for each $R \in \mathcal{R}$, $|F^{-1}(R)| \leq 2$.  To see why this is so, suppose $g$, $h$, and $p$ are pairwise distinct elements of $\mathcal{L}$ satisfying $g(V(G))=h(V(G))=p(V(G))$.  Notice that if $g(z) = h(z)$, it must be that $g=h$ since the fact that $L(x_1), \ldots , L(x_s)$ are pairwise disjoint implies that there can only be one way to assign the $s$ colors in $g(V(G)) - \{g(z)\}$ to the vertices in $\{x_1, \ldots, x_s\}$ in such a way that that vertex $x_i$ is assigned a color from $L(x_i)$ for each $i \in [s]$.  Consequently, $g(z)$, $h(z)$, and $p(z)$ are pairwise disjoint, and we let $a=g(z)$, $b=h(z)$, and $c=p(z)$. 

Since $g(V(G))=h(V(G))$, we know that $h$ must use the color $a$ on some vertex in $\{x_1, \ldots, x_s\}$.  Without loss of generality, suppose that $h(x_1)=a$, and consequently, $a \in L(x_1)$.  We claim that $g(x_1)=b$.  To see why this is so, suppose $g(x_1) \neq b$.  Then, $g(x_1) = d$ for some $d \in L(x_1)$ satisfying $d \neq a$ and $d \neq b$.  However, since $g(V(G))=h(V(G))$, $h$ would need to use the color $d$ on some vertex in $\{x_2, \ldots, x_s\}$ which contradicts the fact that $L(x_1), \ldots , L(x_s)$ are pairwise disjoint.  Thus, $g(x_1) = b$, and consequently, $b \in L(x_1)$.  Finally, note that $p$ must use $a$ and $b$ to color two of the vertices in $\{x_1, \ldots, x_s\}$ which again contradicts the fact that $L(x_1), \ldots , L(x_s)$ are pairwise disjoint.  Having obtained a contradiction, we have that $|F^{-1}(R)| \leq 2$ for each $R \in \mathcal{R}$.

Now, we will prove a lower bound on $|\mathcal{L}|$ which will allow us to complete the proof of the lemma.  Note that any coloring that colors $z$ with some $c_z \in L(z)$ can be extended to a proper $L$-coloring of $G$ in at least $s(s+1)^{s-1}$ ways since $c_z$ is in at most one of the lists $L(x_1), \ldots , L(x_s)$.  Consequently,
$$|\mathcal{L}| \geq (s+1)s(s+1)^{s-1} = 2 \binom{s+1}{2} (s+1)^{s-1}.$$
Finally,
$$ 2 \binom{s+1}{2} (s+1)^{s-1} \leq |\mathcal{L}| = \sum_{R \in \mathcal{R}} |F^{-1}(R)| \leq 2 |\mathcal{R}|$$
which implies the desired inequality.    
\end{proof}

We are finally ready to determine precisely when $\tau_{s-2}(s,b)$ equals 1 which will indicate that Theorem~\ref{thm: its6not8} is best possible when $l=2$.

\begin{customthm} {\bf \ref{cor: l=1}}
For each integer $s$ satisfying $s \geq 2$, $\tau_{s-2}(s,b) = 1$ if and only if $s^s \leq b \leq \binom{s+1}{2}(s+1)^{s-1}-1$.
\end{customthm}

\begin{proof}
Let $G = K_1 \vee K_{s,b}$.
By Lemma~\ref{lem: closeaandi} and Theorem~\ref{thm: its6not8} we know that $b \geq s^s$ if and only if $\tau_{s-2}(s,b) \geq 1$.

Suppose $\tau_{s-2}(s,b)=1$.  This means that $b \geq s^s$.
Let the vertex set of the copy of $K_1$ used to construct $G$ be $W = \{w_1\}$. 
Furthermore, suppose the copy of $K_{s,b}$ used to construct $G$ has bipartition $X = \{x_1,\ldots,x_s\}$, $Y = \{y_1, \ldots , y_b\}$.  
For the sake of contradiction, suppose that $b \geq \binom{s+1}{2}(s+1)^{s-1}$. Then Theorem~\ref{thm: its6not8} implies $\tau_{s-2}(s,b) \geq 2$ since $b \geq \binom{2+s-1}{2}(2+s-1)^{s-1}$ which is a contradiction.

Conversely, assume $s^s \leq b \leq \binom{s+1}{2}(s+1)^{s-1} - 1$.  This means $\tau_{s-2}(s,b) \geq 1$.
To show that $\tau_{s-2}(s,b) \leq 1$, we will show that $\chi_\ell(G)-\chi(G) \leq s-2$ which is equivalent to $\chi_\ell(G) \leq s+1$.
To show that $\chi_\ell(G) \leq s+1$ we will prove $G$ is $(s+1)$-choosable.
Suppose $L$ is an arbitrary $(s+1)$-assignment for $G$. 
We must show that there exists a proper $L$-coloring of $G$.
Note that if there exists $i,j \in [s]$ with $i \neq j$ and $L(x_i) \cap L(x_j) \neq \emptyset$ the existence of a proper $L$-coloring of $G$ follows from Lemma~\ref{lem: non-empty_General}. So we may assume that $L(x_1), \ldots, L(x_s)$ are pairwise disjoint.
Let $L'$ be the function $L$ with domain restricted to $W \cup X$.
By Lemma~\ref{lem: baseCases:(} we know that 
$|\mathcal{R}(G[W \cup X],L')| \geq \binom{s+1}{2}(s+1)^{s-1}$.
Since $b \leq \binom{s+1}{2}(s+1)^{s-1}-1$, the above inequality implies that there is a proper $L'$-coloring $g$ of $G[W \cup X]$ such that $L(y_i) \neq g(W \cup X)$ for each $i \in [b]$. Color the vertices in $W \cup X$ according to $g$. 
Let $L''(y_i) = L(y_i)-g(W \cup X)$ for each $i \in [b]$. Clearly $|L''(y_i)| \geq 1$ for each $i \in [b]$.
Finally, color $y_i$ with any color from $L^{\prime\prime}(y_i)$ for each $i\in[b]$.
This completes a proper $L$-coloring of $G$.
\end{proof}

\noindent{\bf Acknowledgment:} The authors would like to thank Hemanshu Kaul for his helpful comments.  The authors also thank the anonymous referee whose comments greatly improved this paper.  This project was completed as part of an undergraduate research course at the College of Lake County during the summer 2022, fall 2022, spring 2023, and summer 2023 semesters.  The support of the College of Lake County is gratefully acknowledged.

\end{document}